\newtheorem{prop}{Proposition}[section]
\newtheorem{lem}[prop]{{Lemma}}
\newtheorem{proposition}[prop]{Proposition}
\newtheorem{theorem}[prop]{Theorem}
\newtheorem{corollary}[prop]{{Corollary}}
\newtheorem{lemma}[prop]{Lemma}
\theoremstyle{definition}
\newtheorem{remark}[prop]{Remark}
\newtheorem{definition}[prop]{Definition}
\newtheorem{example}[prop]{Example}
\newtheorem{question}[prop]{Question}
\newtheorem{conjecture}[prop]{Conjecture}
\numberwithin{equation}{section}
\newcommand{\Z}{\mathbb{Z} }
\newcommand{\C}{\mathbb{C} }
\newcommand{\complexs}{\mathbb{C}}
\newcommand{\reals}{\mathbb{R}}
\newcommand{\integers}{\mathbb{Z}}
\newcommand{\naturals}{\mathbb{N}}
\newcommand{\into}{\hookrightarrow}
\newcommand{\tensor}{\otimes}
\newcommand{\iso}{\cong}
\DeclareMathOperator{\supp}{supp}
\newcommand{\abs}[1]{\left\lvert#1\right\rvert} 
\newcommand{\innerprod}[1]{\langle #1 \rangle}
\newcommand{\ind}{\operatorname{ind}}
\begin{document}

\title{Large scale index of multi-partitioned manifolds}
\author{Thomas Schick \and Mostafa Esfahani Zadeh}
\address{Thomas Schick, 
Mathematisches Institut,
Georg-August-Universit\"at G\"ottingen,
Germany
\newline
Mostafa Esfahani Zadeh,
Sharif University of Technology,
Tehran-IRAN}
\email{thomas.schick@math.uni-goettingen.de}
\email{esfahani@sharif.ir}

\begin{abstract}

Let M be a complete n-dimensional Riemannian spin manifold, partitioned by q two-sided
hypersurfaces  which have a compact transverse intersection
N and which in addition satisfy a certain coarse transversality
condition. Let E be a Hermitean bundle on M with connection.
We define a coarse multi-partitioned index of the spin Dirac
operator on M twisted by E.

Our main result is the computation of this multi-partitioned index as
 the Fredholm index of the Dirac operator
on the compact manifold N, twisted by the restriction of E to N.

We establish the following main application: if the scalar curvature of M is
bounded below by a positive constant everywhere (or even if this happens only on one of the
quadrants defined by the partitioning hypersurfaces) then the
multi-partitioned index vanishes. Consequently, the multi-partitioned index
is an obstruction to uniformly positive scalar curvature on M.

The proof of the multi-partitioned index theorem proceeds in two steps: first
we establish a strong new localization property of the multi-partitioned
index which is the main novelty of this note. This we establish even when we
twist with an arbitrary Hilbert A-module bundle E (for an auxiliary
C*-algebra A).
This allows to reduce to the case where M is the product of N with Euclidean
space of dimension q. For this
special case, standard methods for the explicit calculation of the index in
this product situation can be adapted to obtain the result.
 \end{abstract}

\keywords{large scale index theory, partitioned manifold index theorem, coarse
  index theory, large scale geometry, coarse geometry, positive scalar
  curvature, obstruction to positive scalar curvature, Hilbert A-module
  bundles, K-theory of C*-algebras, index theory}

\subjclass{19K56, 58J20, 46L80, 58J22}

\maketitle
\section{Introduction}

Consider a complete Riemannian spin manifold $(M,g)$. If $M$ is
non-compact, which is the situation we are interested in, the classical index
theory of elliptic operators (like the Dirac operator) usually can not be
applied because of lack of the Fredholm property. 

In this situation, using non-commutative geometry and operator algebras, John 
Roe has initiated an adapted index theory which we call ``\emph{large scale
  index theory}'' or sometimes ``\emph{coarse index theory}'' (compare
e.g.~\cite{Roe-index}). The main
player is the \emph{Roe algebra} $C^*(X)$, associated to a complete proper
metric space $X$. This is an algebra of operators (acting on function spaces on $X$),
mildly depending on choices. Its K-theory $K_*(C^*(X))$ is canonically and
functorially associated to $X$, with functoriality for proper and uniformly
expansive maps.

If $M$ is a complete Riemannian spin manifold of dimension $n$ and $E\to M$ is
a Hermitian bundle with connection, we have
the twisted Dirac operator $D_E$. More generally, if $A$ is an auxiliary
$C^*$-algebra and $E$ is a Hilbert $A$-module bundle with $A$-linear
connection one can form $D_E$ as regular unbounded operator in the Hilbert
$A$-module sense. One of the main virtues of large scale index
theory is the construction of its \emph{coarse index} $\ind_c(D_E)\in
K_n(C^*(M;A))$. This index contains information about the geometry: on the one
hand it depends on the metric  only up to bilipschitz equivalence. On the other
hand, it vanishes if the metric has uniformly positive scalar curvature and
$E$ is flat
(actually, it suffices to have this property on sufficiently large subsets of
$M$). It is therefore important to get information about this coarse index. In
the present paper, we will do this for multi-partitioned manifolds.

\begin{definition}\label{def:multipart}
  A complete  $n$-dimensional Riemannian manifold $M$ is
  \emph{multi-partitioned} by codimension $1$ hypersurfaces $M_1,\dots,M_q$ if
  \begin{itemize}
  \item each of the $M_k$ is a two-sided, separating $M=M_k^+\cup M_k^-$ with
    $M_k^+\cap M_K^-=M_k$;
  \item $N:=\bigcap_{k=1}^q M_k$ is compact, and in a neighborhood of $N$
   the $M_i$ intersect mutually transversally. In particular, $N$ is itself a
    submanifold of dimension $n-q$ with trivial normal bundle;
  \item the collection of hypersurfaces is \emph{coarsely transversal} in the
    sense that for each $r>0$ there is $s>0$ such that $\bigcap_{k=1}^q
    U_r(M_k)\subset U_s(N)$. Here, for a subset $X\subset M$ we set
    $U_r(X):=\{p\in M\mid d(p,X)\le r\}$, the $r$-neighborhood of $X$.
  \end{itemize}
\end{definition}

In Section \ref{sec:largescaleind} we will prove
\begin{lemma}\label{lem:map_to_Rq}
  If $M$ is a multi-partitioned manifold, partitioned by $M_1,\dots, M_q$, the
  signed distance to the $M_k$ 
  defines a proper and uniformly expansive map $f\colon M\to \reals^q$ which
  is smooth near $N:=f^{-1}(0)$ and such that $0$ is a regular value.
\end{lemma}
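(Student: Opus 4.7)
The plan is to define $f = (f_1,\dots,f_q)\colon M\to\reals^q$ by the signed distance $f_k(p) := \epsilon_k(p)\,d(p,M_k)$, where $\epsilon_k(p)=+1$ on $M_k^+$ and $-1$ on $M_k^-$, and then verify the four required properties in turn.

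First I would check that each $f_k$ is $1$-Lipschitz, which gives uniform expansiveness of $f$. If $p,q$ lie on the same side of $M_k$, this is immediate from the ordinary $1$-Lipschitz property of $d(\cdot,M_k)$. If they lie on opposite sides, then because $M_k$ separates $M$, any path from $p$ to $q$ must cross $M_k$, so $d(p,q)\ge d(p,M_k)+d(q,M_k)=|f_k(p)-f_k(q)|$. Hence $f$ is Lipschitz with constant $\sqrt{q}$, and in particular uniformly expansive.

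Next I would prove properness using the coarse transversality hypothesis. Given a compact $K\subset\reals^q$, choose $r>0$ with $K\subset[-r,r]^q$. Then $f^{-1}(K)\subset\bigcap_{k=1}^q U_r(M_k)$, which by coarse transversality is contained in $U_s(N)$ for some $s=s(r)$. Since $N$ is compact, $U_s(N)$ is closed and bounded, hence compact by Hopf--Rinow (here we use completeness of $M$ crucially). As $f^{-1}(K)$ is also closed, it is compact.

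For the local smoothness and regularity at $N$, I would work in the neighborhood $V$ on which the hypersurfaces $M_1,\dots,M_q$ meet mutually transversally. Each $M_k$ admits a tubular neighborhood on which $p\mapsto d(p,M_k)$ is smooth (away from the focal locus), and the sign $\epsilon_k$ is constant on each of the two sides, so $f_k$ is smooth on a neighborhood of $M_k\cap V$, and in particular on a neighborhood of $N$. At a point $p\in N$, the differential $df_k(p)$ is (up to sign) a unit vector normal to $M_k$; transversality of the $M_k$ at $p$ says exactly that these $q$ normal directions are linearly independent, so $df(p)\colon T_pM\to\reals^q$ has rank $q$. Hence $0$ is a regular value and $f^{-1}(0)$ locally equals $N$.

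The only real subtlety, and where I would spend the most care, is the smoothness and transversality statement at $N$: one has to notice that the hypothesis ``mutually transversal near $N$'' automatically brings $N$ inside the common tubular/normal injectivity neighborhood of all the $M_k$, so that the distance functions are simultaneously smooth there. Everything else is a direct verification from the axioms of Definition~\ref{def:multipart}.
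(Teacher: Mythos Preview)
Your proposal is correct and follows essentially the same route as the paper: the paper also defines $f$ via signed distances, checks $1$-Lipschitzness of each coordinate by treating the same-side and opposite-side cases separately, and deduces properness from coarse transversality together with compactness of $N$. In fact you supply more detail than the paper does, particularly on the smoothness near $N$ and the regular value claim, which the paper leaves implicit.
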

The notion of uniform expansiveness is recalled in Definition
\ref{def:functor}. Also the other notions of coarse index theory used in this
introduction are recalled in Section \ref{section2}.

\begin{definition}
  Let $M$ be a multi-partitioned spin manifold and $E\to M$ a Hilbert
  $A$-module bundle 
  with connection. We define the multi-partitioned index of
  the Dirac operator twisted by $E$ as
  \begin{equation*}
    \ind_p(D_E) :=\kappa(f_*(\ind_c(D_E))) \in K_{n-q}(C^*(\reals^n;A)).
  \end{equation*}
  Here, $f_*\colon K_*(C^*(M;A))\to K_*(C^*(\reals^q;A))$ is obtained by
  functoriality of the K-theory of the twisted Roe algebra and
  \begin{equation*}
    \kappa\colon K_*(C^*(\reals^q;A))\to K_{*-q}(A)
  \end{equation*}
  is a canonical isomorphism we recall in Corollary \ref{corol:KRq}. 
\end{definition}

Our main result is the calculation of this multi-partitioned manifold index:

\begin{theorem}\label{theo:part_mf}
  Let $M$ be a complete spin manifold of dimension $n$ with proper continuous
  uniformly expansive map 
  $f\colon M\to \reals^q$. Assume that $f$ is smooth near $N:=f^{-1}(0)$ and
  that $0$ is a regular value. This implies that $N$ is a compact submanifold
  with trivial normal bundle of dimension $n-q$. 
  In particular, $N$ inherits a
  spin structure from $M$. For example, $M$ could be multi-partitioned by
  $M_1,\dots, M_q$. Let $E\to M$ be a Hermitean bundle with
  connection. Then
  \begin{equation*}
   \kappa( f_*(\ind_c(D_{M,E}))) = \ind(D_{N,E|_N}) \in K_{n-q}(\complexs).
  \end{equation*}
 Note
  that $K_{n-q}(\complexs)=\integers$ if $n-q$ is even, and
  $K_{n-q}(\complexs)=\{0\}$ if $n-q$ is odd and $\ind(D_{N,E|_N})\in
  K_{n-q}(\C)$ is the index of
  the Dirac operator on the compact spin manifold $N$, twisted by the
  Hermitean bundle $E|_N$.
\end{theorem}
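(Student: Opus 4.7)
The proof breaks into two stages, as flagged in the abstract: a localization step that reduces everything to data near $N$, followed by an explicit calculation in the product model $N\times\reals^q$.

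For the localization, the plan is to exploit finite propagation speed of the wave operator $e^{itD_E}$ together with the uniform expansiveness of $f$. A representative of $\ind_c(D_{M,E})$ can be built from operators of the form $\chi(D_E)$ with $\chi$ a chopping function whose Fourier transform has compact support, so the Schwartz kernels sit in a fixed controlled neighborhood of the diagonal in $M\times M$; under $f_*$ they are pushed to kernels in a bounded neighborhood of the diagonal in $\reals^q\times\reals^q$. Uniform expansiveness then shows that any modification of $M$ outside a sufficiently large neighborhood $U_r(N)$ alters $f_*\ind_c(D_{M,E})$ only by classes whose $f$-image escapes every ball of $\reals^q$ as $r\to\infty$, and such classes become zero after applying $\kappa$ since they live in the K-theory of a Roe algebra at infinity. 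I conclude that $\kappa(f_*\ind_c(D_{M,E}))$ depends only on the germ of $(M,E,f)$ along $N$.

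Because $N$ is compact with trivial normal bundle, the tubular neighborhood theorem provides a diffeomorphism from a neighborhood of $N$ in $M$ onto $N\times B^q$, and a bounded homotopy of $f$ arranges $f$ to coincide with the second projection on this tube. The localization principle then allows me to replace $(M,E,f)$ by the product model $(N\times\reals^q,\,\mathrm{pr}_1^*(E|_N),\,\mathrm{pr}_2)$ equipped with the product metric and the twisted Dirac operator $D_{N,E|_N}\tensor 1 + 1\tensor D_{\reals^q}$, without changing $\kappa(f_*\ind_c(D_{M,E}))$. In the product case the external product formula in coarse K-theory gives
\[
\ind_c(D_{N\times\reals^q,\,\mathrm{pr}_1^*(E|_N)}) = \ind(D_{N,E|_N})\cdot[D_{\reals^q}]
\]
in $K_n(C^*(N\times\reals^q))$, where $[D_{\reals^q}]\in K_q(C^*(\reals^q))$ is the coarse fundamental class; pushing forward along $\mathrm{pr}_2$ collapses the compact factor and yields $\ind(D_{N,E|_N})\cdot[D_{\reals^q}]\in K_q(C^*(\reals^q))$, and Corollary~\ref{corol:KRq} identifies $\kappa[D_{\reals^q}]$ with the generator of $K_0(\complexs)$, producing $\ind(D_{N,E|_N})$ as required.

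The main obstacle is the first step. Uniform expansiveness is much weaker than a coarse equivalence, so tracking the interaction of $f_*$ with finite-propagation representatives of $\ind_c(D_{M,E})$ is delicate; one must in addition verify independence of the chosen tubular neighborhood and of the smoothing of $f$, and check that the product structure extracted from a tube around $N$ and the abstract product $N\times\reals^q$ yield the same class under $f_*$. This is precisely the ``new localization property'' advertised in the introduction, and once it is in place the product computation is a routine application of the external product formula and the standard calculation of $K_*(C^*(\reals^q))$.
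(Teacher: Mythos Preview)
Your two-step outline matches the paper's, and you correctly flag the localization step as the crux. But what you offer for that step is only a heuristic, and its mechanism is not the paper's. You try to track the effect of modifying $(M,E)$ far from $N$ by propagation estimates on the \emph{source} and an undefined ``Roe algebra at infinity'', with a limiting ``as $r\to\infty$'' argument that does not obviously yield a statement for any fixed $r$. You then concede that ``this is precisely the new localization property advertised in the introduction''---so the main step is not actually proved.

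The paper's localization (Proposition~\ref{prop:loca}) works entirely on the \emph{target} $\reals^q$ and is a concrete K-theory computation rather than a propagation estimate: if two operators in $D^*(\reals^q;A)$ coincide on any nonempty open ball $U=B_r(0)$, their classes in $K_*(\reals^q;A)$ are already equal. The reason is that their difference lies in $D^*(Y\subset\reals^q;A)$ with $Y=\reals^q\setminus U$, and one shows $K_*(Y;A)=0$ by iterated Mayer-Vietoris, using that the half-spaces $\reals^q_\pm\setminus B_r(0)$ are flasque. After that, finite propagation and a careful choice of the covering isometries $V,V'$ guarantee that the pushed-forward representatives of $f_*[D_{M,E}]$ and $f'_*[D_{M',E'}]$ literally coincide on a small ball $B_{t/2}(0)\subset\reals^q$, which is all that is needed.

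For the product step your route also differs. The paper does not use an external product formula---that appears only as a conjecture for general $A$ in Section~\ref{sec:generalization}. Instead it iterates the Mayer-Vietoris boundary isomorphism $\delta_{MV}$ of Corollary~\ref{corol:KRq}, together with Proposition~\ref{prop:MV_ind} (``boundary of Dirac is Dirac''), to peel off one $\reals$-factor at a time and land on $\ind_c(D_{N,E|_N})\in K_{n-q}(C^*(N))$, which is then identified with the Fredholm index via Remark~\ref{rem:MF}. Your external-product argument is plausible for $A=\complexs$, but it requires input the paper deliberately avoids.
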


\begin{remark}
  We strongly expect that Theorem \ref{theo:part_mf} generalizes to arbitrary
  $C^*$-algebras $A$ and Hilbert $A$-modules $E$ over $M$,
  equating the multi-partitioned index with the Mishchenko-Fomenko index
  $\ind(D_{N,E|_N})\in K_{n-q}(A)$ 
  of the Dirac operator on the compact manifold $N$, twisted with the Hilbert
  $A$-module bundle $E|_N$. We comment more on this in Section
  \ref{sec:generalization}. 
\end{remark}

Historically the first version of Theorem \ref{theo:part_mf} is the
\emph{partitioned manifold index 
  theorem} of Roe and Higson, for the case $q=1$ with
several proofs, e.g.~in \cite{Roe-index} or \cite{Higson-cobordism}. Here only
the case 
$n$ odd is interesting (and treated), as otherwise the target group
$K_1(\complexs)=0$. In \cite{Zadeh1,Zadeh2}, the approach of
\cite{Higson-cobordism} is generalized, still for $q=1$, from $\complexs$ to
arbitrary 
coefficient algebras $A$, as long as $n$ is odd. Finally, Siegel treats the
case of multi-partitioned manifolds with
additional geometric restrictions in \cite{P.Siegel}. 

All these proofs consist of two steps. The first is a reduction to the product
case $M=N\times \reals^q$, and the second is a more or less explicit
calculation in this product case.

In this note we develop a new and particularly strong method for the reduction
step. Indeed, we prove in particular the following:
\begin{proposition}\label{prop:loc_ind}
  Assume that $f\colon M\to \reals^q$ with Hilbert $A$-module bundle $E\to M$
  with connection
  and $f'\colon M'\to \reals^q$ with Hilbert $A$-module bundle $E'\to M'$ with
  connection are
  two 
  complete Riemannian spin manifolds as in Theorem \ref{theo:part_mf}. Assume
  there are open neighborhoods $U$ of $f^{-1}(0)$ in $M$ and $U'$ of
  $f^{-1}(0)$ in $M'$ and a spin-structure preserving isometry $\psi\colon
  U\to U'$ which is covered by an $A$-isometry $\Psi\colon E|_U\to
  E'|_{U'}$ preserving the connections. Then 
  \[f_*(\ind_c(D_E)) = f'_*(\ind_c(D_{E'}))\in K_n(C^*(\reals^q;A)).\]
\end{proposition}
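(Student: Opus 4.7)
My proposed plan is to prove a strong localization principle, from which the proposition follows immediately: the class $f_*(\ind_c(D_E))$ depends only on the Riemannian spin structure and Hilbert $A$-module bundle data in an arbitrarily small open neighborhood of $N = f^{-1}(0)$, independent of the rest of $M$ and (up to a standard coarse-equivalence step) of the specific choice of $f$. Once established, the isometries $\psi$ and $\Psi$ transport this local datum from $M$ to $M'$, yielding the claimed equality.

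The first step is to represent $\ind_c(D_E)$ and $\ind_c(D_{E'})$ by finite propagation operators $\chi(D_E)$ and $\chi(D_{E'})$, where $\chi$ is a normalizing function whose Fourier transform is supported in $[-\tau,\tau]$. By finite propagation speed for the Dirac wave equation, these operators have propagation at most $\tau$ in $M$ and $M'$ respectively. For $\tau$ small compared to the distance from $N$ to the boundary of $U$, the action of $\chi(D_E)$ on the $\tau$-interior $U_\tau$ of $U$ is determined by the Dirac data on $U$ alone, and is intertwined with the corresponding restriction of $\chi(D_{E'})$ by the isometry $\Psi$.

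The heart of the argument is then to show that two operators in $C^*(M;A)$ and $C^*(M';A)$ whose restrictions to a common neighborhood of $N$ agree via $\Psi$ have equal images in $K_n(C^*(\reals^q;A))$ under $f_*$ and $f'_*$. My approach would be to write the difference, after passing through the covering isometries used to define $f_*, f'_*$, as a sum of ``off-diagonal'' and ``far-from-$N$'' contributions in the Hilbert module representation on $\reals^q$. The off-diagonal pieces decay by finite propagation in $M$ and in $M'$, while the far-from-$N$ pieces push forward into the subalgebra of $C^*(\reals^q;A)$ consisting of operators supported away from a small ball around $0$; one then seeks to show the resulting difference class is trivial, e.g.\ by exhibiting an explicit homotopy through normalizing operators in $C^*(\reals^q;A)$, or by placing it in an ideal with vanishing $K_n$.

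The main obstacle is precisely this last vanishing step. A priori, elements supported away from $0$ in $C^*(\reals^q;A)$ can carry nontrivial $K$-theory classes, since $K_n(C^*(\reals^q;A)) \cong K_{n-q}(A)$ is sensitive to behavior at infinity, so triviality is not automatic. Overcoming this will require a careful interplay between finite propagation speed (controlling the difference at bounded scales) and the specific structure of the pushforward $f_*$ (which sees only the coarse geometry of $\reals^q$). A natural technical device is a coarse Mayer--Vietoris argument applied to a decomposition $M = U \cup (M \setminus V)$ with $V \subset U$ a smaller neighborhood of $N$, combined with a homotopy interpolating between the finite-propagation representatives; this is where I expect the paper's new localization technique to play a decisive role.
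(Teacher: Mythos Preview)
Your setup is exactly right through the finite-propagation step: choosing $\chi$ with small Fourier support and compatible covering isometries $V,V'$ so that the pushed-forward representatives agree on a ball $B_{t/2}(0)\subset\reals^q$. You have also correctly identified the obstacle: the ideal of operators in $C^*(\reals^q;A)$ supported away from $0$ does \emph{not} have vanishing $K$-theory (indeed $\reals^q\setminus B_r(0)$ is coarsely equivalent to $\reals^q$), so a localization argument run directly in $C^*(\reals^q;A)$ cannot succeed in the way you sketch.

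The missing idea, which is precisely the paper's key move, is to localize one level higher: work with the \emph{fundamental class} $f_*[D_E]\in K_*(D^*(\reals^q;A)/C^*(\reals^q;A))$ rather than with the index in $K_*(C^*(\reals^q;A))$, and only afterwards apply the boundary map $\partial$. In the quotient $D^*/C^*$ the relevant vanishing does hold: setting $Y=\reals^q\setminus B_r(0)$, one has $K_*(D^*(Y\subset\reals^q;A)/C^*(Y\subset\reals^q;A))\cong K_{*}(Y;A)=0$, because locally finite $K$-homology of $Y$ vanishes (split $Y$ into two flasque half-spaces and apply Mayer--Vietoris, iterating down to $q=1$). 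If $T_1,T_2\in D^*(\reals^q;A)$ coincide on $B_r(0)$, then $T_1-T_2$ lies in $D^*(Y\subset\reals^q;A)$ (the support condition is automatic since $U_r(Y)=\reals^q$; pseudolocality gives the required compactness of $\phi(T_1-T_2)$ for $\phi$ supported in $B_r(0)$). The long exact sequence for the ideal $D^*(Y\subset\reals^q;A)/C^*(Y\subset\reals^q;A)$ then forces $[T_1]=[T_2]$ in $K_*(D^*(\reals^q;A)/C^*(\reals^q;A))$, and naturality of $\partial$ transports this to the desired equality of indices. Your proposed Mayer--Vietoris on $M=U\cup(M\setminus V)$ is aimed at the wrong target; the decomposition that matters is of $\reals^q\setminus B_r(0)$, and the correct receptacle is $K$-homology, not $K_*(C^*)$.
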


Indeed, Proposition \ref{prop:loc_ind} is a corollary of a localization
theorem for classes in $K_*(C^*(\reals^q;A))$: if two such are obtained as
indices of operators which coincide on an arbitrary non-empty open subset of
$\reals^q$, then they are already equal.

An consequence of theorem \ref{theo:part_mf} is the following obstruction to
positive scalar curvature.

\begin{theorem}\label{theo:obstructmet}
Let $(M,g)$ be a complete Riemannian spin manifold, partitioned transversally
and coarsely by $q$
hypersurfaces whose intersection is a compact manifold $N$. If $\hat A(N)\neq
0$ then  
the scalar curvature of $g$ (or of any other complete Riemannian metric which
is  bilipschitz equivalent to $g$)
can not be uniformly positive outside a compact subset of any quadrant formed
by the partitioning hypersurfaces.  
\end{theorem}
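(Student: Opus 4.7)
The plan is to combine Theorem \ref{theo:part_mf} with a coarse localization argument and the flasqueness of the complement of an orthant in $\reals^q$. By Theorem \ref{theo:part_mf} applied to the trivial bundle $E=\C$, together with Atiyah--Singer on the compact spin manifold $N$ of dimension $n-q$, one has $\ind_p(D_M)=\ind(D_N)=\hat A(N)\in K_{n-q}(\C)$. Since bilipschitz equivalent complete Riemannian metrics on $M$ induce the same coarse structure, the same Roe algebra (canonically), and the same class $\ind_c(D_M)$ and hence the same $\ind_p$ by functoriality, the statement is bilipschitz invariant. It therefore suffices to derive a contradiction from $\hat A(N)\ne 0$ together with the assumption that $g$ itself has scalar curvature $\ge c>0$ on $Q\setminus K$ for some quadrant $Q$ of the partition and some compact $K\subset Q$.

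The first nontrivial step is the coarse localization of the index. By the Lichnerowicz formula and the scalar curvature bound, on spinors supported in the open set $U:=\mathrm{int}(Q)\setminus K$ one has $\langle D_M^2\psi,\psi\rangle\ge (c/4)\lVert\psi\rVert^2$. This uniform spectral gap on $U$ is the standard hypothesis for the coarse positive-scalar-curvature vanishing principle, and the usual parametrix plus odd-chopping-function construction of $\ind_c(D_M)$ places the class in the image of the natural map
\[ K_n(C^*(Z\subset M))\longrightarrow K_n(C^*(M)),\qquad Z:=(M\setminus\mathrm{int}(Q))\cup K, \]
where $C^*(Z\subset M)\triangleleft C^*(M)$ denotes the ideal of operators whose propagation-support lies in a bounded neighborhood of $Z\times Z\subset M\times M$.

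Now push forward by $f$. With orientations of the $M_k$ chosen so that $Q=\bigcap_k M_k^+$, one has $f(Q)\subset\reals^q_+:=[0,\infty)^q$, hence $f(M\setminus\mathrm{int}(Q))\subset L:=\reals^q\setminus\mathrm{int}(\reals^q_+)=\{x\in\reals^q:x_i\le 0\text{ for some }i\}$; since $f(K)$ is bounded, $f(Z)$ lies in a bounded neighborhood of $L$. By functoriality $f_*(\ind_c(D_M))$ lies in the image of $K_n(C^*(L\subset\reals^q))\to K_n(C^*(\reals^q))$. The crucial point is that $L$ is flasque as a coarse subspace of $\reals^q$: the translation $\tau(x)=x-(1,\dots,1)$ is an isometry of $\reals^q$ mapping $L$ into itself, its iterates $\tau^n$ move every bounded subset of $L$ off to infinity (the minimum coordinate decreases without bound), and conjugation by the implementing isometry of $\tau$ preserves the ideal $C^*(L\subset\reals^q)$. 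By Roe's flasqueness principle this forces $K_*(C^*(L\subset\reals^q))=0$, so $f_*(\ind_c(D_M))=0$, whence $\ind_p(D_M)=\kappa(0)=0$, contradicting $\hat A(N)\ne 0$.

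The main obstacle is the first step, namely the coarse localization of $\ind_c(D_M)$ when the ``bad'' set $Z$ is noncompact. The specialization to compact $Z$ is the classical Lichnerowicz--Rosenberg vanishing theorem; for noncompact $Z$ one must ensure that the finite-propagation functional calculus used to build $\ind_c(D_M)$ respects the localization to a bounded neighborhood of $Z$, which is guaranteed by the uniform spectral gap on $U$ provided by the scalar curvature hypothesis. The remaining ingredients—the flasqueness of $L$ and the bilipschitz invariance of the whole construction—are routine applications of standard principles of large scale index theory.
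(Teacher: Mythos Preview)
Your argument is correct and follows essentially the same route as the paper's proof: invoke the coarse localization lemma (the paper's Lemma~\ref{vanlem}) to place $\ind_c(D_M)$ in the image of $K_*(C^*(Z\subset M))$, push forward by $f$ to land in the image of $K_*(C^*(L\subset\reals^q))$ where $L=\reals^q\setminus[0,\infty)^q$, use flasqueness of $L$ to conclude $f_*(\ind_c(D_M))=0$, and then contradict Theorem~\ref{theo:part_mf}. Your treatment is in fact slightly more complete than the paper's written proof: you handle the compact exceptional set $K$ explicitly (the paper's proof literally assumes positivity on the entire quadrant), you make the bilipschitz invariance explicit, and you give a concrete flasqueness map $\tau(x)=x-(1,\dots,1)$ for $L$ where the paper simply asserts flasqueness.
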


\section{Basics of large scale index theory}\label{section2}

We start with a very brief review of the Roe algebra $C^*(X;A)$ and a companion,
the structure algebra $D^*(X;A)$, inside which $C^*(X;A)$ is an ideal. For
simplicity, we assume that $X$ is a positive dimensional Riemannian manifold
throughout (possibly with boundary). We follow
\cite{Roe-index,Piazza-Schick}. For the basics of Hilbert $A$-modules and
their operators (adjointability, $A$-compactness,\ldots), compare
\cite{Lance}.

\begin{definition}
  Given a Hilbert $A$-module bundle $E\to X$, consider the Hilbert $A$-module
  $L^2(E)$ of square integrable sections of $E$
  with $A$-valued inner product given by integration of the pointwise
  $A$-valued inner product. 

  One defines the \emph{structure algebra} $D^*(X;A)$ as $C^*$-closure of the
  algebra of bounded adjointable $A$-linear operators $T$ on $L^2(E)$ which
  satisfy 
  \begin{enumerate}
  \item $T$ has finite propagation, i.e.~there is $R>0$ such that $\supp(T
    s)\subset U_R(\supp(s))$ for each $s\in L^2(E)$.
  \item $T$ is pseudolocal: for any compactly supported
    continuous functions $\phi$ the operator $\phi
    T- T\phi$ is an $A$-compact operator, where we let $\phi$ act on $L^2(E)$ as
    multiplication operator.
  \end{enumerate}

  The \emph{Roe algebra} $C^*(X;A)$ is the norm closure of operators
  $T$ as above which satisfy
  \begin{enumerate}
  \item $T$ has finite propagation
  \item $T$ is \emph{locally compact}, i.e.~for every compactly supported continuous function
    $\phi$, $T\phi$ and $\phi T$ are $A$-compact operators.
  \end{enumerate}
  One checks immediately that $C^*(X;A)$ is an ideal in $D^*(X;A)$.
\end{definition}

\begin{remark}
  Strictly speaking, one has to enlarge $E$ by tensoring with
  $l^2(\naturals)$; we gloss over this details as all we have to do happens in
  a fixed summand canonically isomorphic to $E$, as in \cite{HankePapeSchick}.
\end{remark}

\begin{definition}\label{def:functor}
  Let $f\colon X\to Y$ be a continuous map between complete Riemannian
  manifolds.
  \begin{itemize}
  \item $f$ is called proper if the inverse image of every compact subset of
    $Y$ is compact
  \item $f$ is uniformly expansive if for every $r>0$ there is $s>0$ such that
    $d(f(x),f(y))\le s$ whenever $d(x,y)\le r$.
  \end{itemize}

  Given, in addition, Hilbert $A$-module bundles $E\to X$ and $F\to Y$, an 
  isometric $A$-embedding $V\colon L^2(E)\to L^2(F)$
  is said to \emph{cover $f$ in the $D^*$-sense} if $V$ is a norm-limit of
  operators $V$ such that
  \begin{itemize}
  \item $V$ has \emph{finite propagation}, i.e.~there is $R>0$ such that
    $\supp(V s)\subset U_R(f(\supp(s)))$ 
  \item whenever $\phi$ is a compactly supported continuous function on $X$
    and $\psi$ is a compactly supported continuous function on $Y$ such that
    $\phi\cdot( \psi\circ f)=0$ then $\psi V \phi$ is compact.
  \end{itemize}

By \cite[Lemma 7.7]{HigsonRoeCBC} one can always find an isometry $V$ covering
$f$ in the $D^*$-sense, even itself with prescribed finite propagation $R>0$. 

Given such an isometry $V$, $Ad_V(T):= VTV^*$ then defines a map $Ad_V\colon D^*(X;A)\to
D^*(Y;A)$ which restricts to a map $C^*(X;A)\to C^*(Y;A)$.

As in \cite[Lemma 3]{HigsonRoeYu}, the induced map on K-theory does not depend
on the choice of $V$, but only on $f$. This implies that
\begin{itemize}
\item $C^*(X;A)$ and $D^*(X;A)$ and therefore also the quotient algebra
  $D^*(X;A)/C^*(X;A)$ are well defined up to non-canonical isomorphism: for
  the different choices of the Hilbert module $E$ entering the definition (but
  which we avoided in the notation) we find non-canonical isomorphisms of the
  resulting $C^*$-algebras.
\item  $K_*(C^*(X;A))$, $K_*(D^*(X;A))$, and $K_*(D^*(X;A)/C^*(X;A))$ are well
  defined up to canonical isomorphism and are functorial for proper continuous
  uniformly expansive maps.
\end{itemize}

One defines $K_*(X;A):= K_{*+1}(D^*(X;A)/C^*(X;A))$, the \emph{locally finite
  K-homology of $X$ with coefficients in $A$}.
\end{definition}

In the original papers of Roe the compactness condition on $V$ was forgotten
to be 
mentioned. It is introduced (with this terminology) in \cite[Definition
1.7]{Piazza-Schick} 
or (under the name ``covers topologically'') in \cite[Definition
2.4]{P.Siegel}.


We will use vanishing of these K-groups in a number of
situations. The most powerful and useful concept in this context is 
\emph{flasqueness}. 

\begin{definition}
  A complete Riemannian manifold $M$ (possibly with boundary) is called
  \emph{flasque} if there is a continuous, proper and uniformly expansive 
  map $f\colon M\to M$ with the following properties:
  \begin{enumerate}
  \item there is a continuous uniformly expansive proper homotopy between $f$
    and the identity
  \item for every compact subset $K\subset M$ there is an $N\in\naturals$ such
    that $f^N(M)\cap K=\emptyset$.
  \end{enumerate}
\end{definition}

\begin{example}
  For an arbitrary manifold $X$, the product $X\times [0,\infty)$ is
  flasque. Indeed, the map $f\colon X\times [0,\infty)\to X\times [0,\infty); (x,t)\mapsto
  (x,t+1)$ satisfies the conditions required in the definition of
  flasqueness.
  \end{example}

\begin{proposition}\label{prop:flasque_vanishing}
  If $M$ is flasque, then
  \begin{equation*}
    K_*(C^*(M;A))=0;\qquad K_*(D^*(M;A))=0;\qquad K_*(M;A)=0.
  \end{equation*}
\end{proposition}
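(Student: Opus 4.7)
My approach is the classical Eilenberg swindle driven by the flasqueness map, which is the standard tool for vanishing results of this sort in coarse geometry. Fix the continuous, proper, uniformly expansive self-map $f\colon M\to M$ witnessing flasqueness, together with a covering isometry $V$ for $f$ in the $D^*$-sense supplied by Definition \ref{def:functor}. Conjugation by $V$ induces the functorial map $f_*$ on $K_*(C^*(M;A))$, on $K_*(D^*(M;A))$, and on the quotient $K_*(M;A)$. Viewing the given proper, uniformly expansive homotopy between $f$ and $\id_M$ as a proper uniformly expansive map $M\times [0,1]\to M$ and invoking the functoriality clause of Definition \ref{def:functor} together with a standard cylinder/evaluation argument, one concludes that $f_*$ equals the identity on each of the three K-groups.

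Next I would build the swindle. Using the stabilization noted in the Remark after the definition of $C^*(M;A)$ (identify $L^2(E)$ with $L^2(E)\otimes \ell^2(\naturals)$), set
\[
\Phi(T) \;:=\; \bigoplus_{n=0}^{\infty} V^n T (V^*)^n.
\]
The defining property of flasqueness---that for every compact $K\subset M$ some iterate satisfies $f^N(M)\cap K=\emptyset$---guarantees that when tested against a compactly supported continuous function $\phi$, only finitely many summands of $\Phi(T)\phi$ or $\phi\Phi(T)$ are nonzero. This makes $\Phi(T)$ a well-defined adjointable operator, and allows one to verify that $\Phi$ carries $C^*(M;A)$ and $D^*(M;A)$ into themselves, hence descends to a $*$-endomorphism of the quotient.

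With $\Phi$ in hand, the conclusion is immediate from the identity
\[
\Phi(T)\;=\; T\,\oplus\, V\Phi(T)V^*
\]
built into the block decomposition. On K-theory this reads $[\Phi(T)] = [T] + f_*[\Phi(T)] = [T] + [\Phi(T)]$ by the first paragraph, whence $[T]=0$. Applying the identical argument to $D^*(M;A)$ and to $D^*(M;A)/C^*(M;A)$ yields the two remaining vanishing statements.

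The principal technical obstacle is verifying that $\Phi$ actually lands in the Roe and structure algebras: the propagation of $V^n T (V^*)^n$ grows with $n$, so $\Phi(T)$ cannot have uniform finite propagation and must instead be approximated in norm by its finite partial sums. The work lies in combining this approximation with the escape-to-infinity property $f^N(M)\cap K=\emptyset$ and the locally-compact (respectively pseudolocal) hypothesis on $T$ to check that $\phi\Phi(T)$ and $\Phi(T)\phi$ are $A$-compact (respectively that $[\phi,\Phi(T)]$ is $A$-compact) for every compactly supported $\phi$, so that $\Phi(T)$ lies in the norm closure defining the relevant algebra.
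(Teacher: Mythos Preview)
Your Eilenberg swindle is precisely what the paper invokes: its entire proof is the citation of \cite[Proposition~9.4]{Roe-index} together with the remark that Roe's argument carries over to general $A$ verbatim. Your outline---establish $f_*=\id$ from the coarse homotopy, build $\Phi$, and read off $\Phi_*=\id+f_*\circ\Phi_*$ so that every K-theory class vanishes---matches the cited source and is considerably more explicit than anything the paper itself supplies.

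There is, however, a gap in your resolution of the propagation obstacle you correctly flag. The partial sums $\bigoplus_{n\le N}V^nT(V^*)^n$ do \emph{not} converge to $\Phi(T)$ in norm: since each $V^n$ is an isometry, every tail $\bigoplus_{n>N}$ has norm exactly $\|T\|$. So the mechanism you propose for placing $\Phi(T)$ inside the norm closure defining $C^*(M;A)$ (respectively $D^*(M;A)$) cannot work as stated. The standard remedy is not an approximation argument at all. One abandons the iterates $V^n$ of a single $V$ in favour of independently chosen covering isometries $V_n$ for $f^n$, each with fixed small propagation (permitted by the construction cited in Definition~\ref{def:functor}) and, using the $\ell^2$-stabilisation, with mutually orthogonal ranges in the ample module. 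These assemble into a single isometry on the ample module, and conjugation by it implements $\Phi$ as a genuine $*$-homomorphism; your escape-to-infinity reasoning for local compactness and pseudolocality then goes through exactly as you wrote it.
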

\begin{proof}
   For $A=\complexs$, this is proved in \cite[Proposition 9.4]{Roe-index}. The
   proof carries over to general $A$ almost literally.
\end{proof}


The final ingredient we will need from the basics of large scale index theory
is a Mayer-Vietoris principle. 

\begin{definition}\label{def:rel_algs}
  Let $M$ be a complete Riemannian manifold and $Y\subset M$ a closed
  subset. We define $C^*(Y\subset M;A)\subset C^*(M;A)$ as the closure of the 
  set of all 
  operators $T\in C^*(M;A)$ which have \emph{support near $Y$}, i.e.~such that
  there is
  $R>0$ such that $T\phi =0$ and $\phi T=0$ whenever $\phi\in C_{comp}(M)$
  with $\supp(\phi)\cap
  U_R(Y)=\emptyset$.

  We define $D^*(Y\subset M;A)$ as the closure of the set of all operators
  $T\in D^*(M;A)$ such that $T$ has support near $Y$ and in addition $T\phi$
  and $\phi T$ are $A$-compact operators whenever $\phi\in C_{comp}(M)$ with 
  $\supp(\phi)\cap Y=\emptyset$. Then $C^*(Y\subset M;A)$ and $D^*(Y\subset
  M;A)$ are both ideals in $D^*(M;A)$.
\end{definition}

\begin{proposition}\label{prop:rel_K}
  In the situation of Definition \ref{def:rel_algs}, the canonical maps
  \begin{equation*}
    \begin{split}
      C^*(Y;A)&\into C^*(Y\subset M;A);\qquad D^*(Y;A)\into D^*(Y\subset M;A);\\
    D^*(Y;A)/C^*(Y;A)&\to D^*(Y\subset M;A)/C^*(Y\subset M;A)
    \end{split}
  \end{equation*}
  induce isomorphism in K-theory.
\end{proposition}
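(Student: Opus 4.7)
The plan is to identify, K-theoretically, the two sides by exhibiting both as colimits over $R>0$ of Roe (resp.\ structure) algebras of the $R$-neighborhoods $U_R(Y)\subset M$, and then to invoke the fact that the inclusion $Y\hookrightarrow U_R(Y)$ is a coarse equivalence for every $R>0$.

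To construct the comparison homomorphisms, I would pick, via \cite[Lemma~7.7]{HigsonRoeCBC} applied to the inclusion $\iota\colon Y\hookrightarrow M$, an isometric $A$-embedding $V\colon L^2(E_Y)\to L^2(E_M)$ of finite propagation covering $\iota$ in the $D^*$-sense. Conjugation $T\mapsto VTV^*$ preserves finite propagation and support near $Y$, carries locally compact operators to locally compact ones, and pseudolocal operators to pseudolocal ones, so it induces the three canonical maps $C^*(Y;A)\to C^*(Y\subset M;A)$, $D^*(Y;A)\to D^*(Y\subset M;A)$, and the map on the quotient.

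To prove these are K-theory isomorphisms I would argue as follows. Every element of $C^*(Y\subset M;A)$ is a norm limit of finite-propagation operators whose supports lie in some $U_R(Y)$; multiplication by a smooth cutoff equal to $1$ on $U_R(Y)$ and vanishing outside $U_{R+1}(Y)$ leaves such operators invariant up to $A$-compact error. Hence, after appropriate identifications of ambient Hilbert $A$-modules, $C^*(Y\subset M;A)$ and $D^*(Y\subset M;A)$ coincide with the inductive limits over $R$ of $C^*(U_R(Y);A)$ and $D^*(U_R(Y);A)$ respectively. Since the inclusion $Y\hookrightarrow U_R(Y)$ is a coarse equivalence for each $R$, with a nearest-point map as coarse inverse, the functoriality recalled in Section~\ref{section2} produces isomorphisms $K_*(C^*(Y;A))\cong K_*(C^*(U_R(Y);A))$, and similarly for $D^*$ and for the quotient. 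Passing to the colimit in $R$ then settles the cases $C^*$ and $D^*$, while the quotient statement follows either by the same argument or by the five-lemma applied to the short exact sequences relating $C^*$, $D^*$, and $D^*/C^*$ on both sides.

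The main technical obstacle is the Hilbert-module bookkeeping required to represent $C^*(Y\subset M;A)$ literally as such a colimit: one must verify that passing from an ample Hilbert $A$-module on $M$ to one on $U_R(Y)$, and then to one on $Y$, produces well-defined K-theory maps. This is precisely the independence-of-choice statement flagged after Definition~\ref{def:functor}, and is guaranteed by the tensor-with-$\ell^2(\N)$ absorption remark together with the Higson--Roe--Yu argument that $\mathrm{Ad}_V$ on K-theory depends only on the underlying map. Once this is set up, everything else reduces to routine coarse-equivalence invariance.
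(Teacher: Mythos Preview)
The paper does not give a proof here: it simply cites Siegel \cite{P.Siegel} for $A=\complexs$ and asserts that the argument carries over. So there is no detailed approach to compare to, and your outline for the $C^*$-case is essentially the standard Higson--Roe--Yu argument: $C^*(Y\subset M;A)$ is the closure of the increasing union of the $C^*(U_R(Y)\subset M;A)$, each of which is K-isomorphic to $C^*(U_R(Y);A)$ and hence to $C^*(Y;A)$ by coarse equivalence. That part is fine.

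There is, however, a genuine gap in your treatment of $D^*$. The claim that $D^*(Y\subset M;A)$ coincides with the inductive limit of $D^*(U_R(Y);A)$ is false. The defining condition for $D^*(Y\subset M;A)$ requires not only pseudolocality and support near $Y$, but also that $T\phi$ and $\phi T$ be $A$-compact for every $\phi\in C_c(M)$ with $\supp(\phi)\cap Y=\emptyset$. A pseudolocal operator on $U_R(Y)$, pushed into $L^2(E_M)$, need not satisfy this. Concretely, take $M=\reals$, $Y=\{0\}$: the identity of $L^2([-1,1])$ lies in $D^*([-1,1];A)$, and under the obvious inclusion it becomes the projection $P=\chi_{[-1,1]}$, which is pseudolocal with support near $0$; but for any bump function $\phi$ supported in $[1/2,1]$ the operator $P\phi=\phi$ is multiplication by a nonzero continuous function and is not compact. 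Thus $P\notin D^*(\{0\}\subset\reals;A)$, and your colimit overshoots the target algebra. The same obstruction blocks the ``same argument'' you invoke for the quotient.

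To repair this you should reverse the order of your five-lemma: establish the quotient case directly and then deduce $D^*$ from $C^*$ and $D^*/C^*$. One way to handle the quotient is to observe that, for $\phi\in C_c(M)$ supported off $Y$, the operator $\phi V$ is itself $A$-compact (finite propagation of $V$ lets you insert a compactly supported $\psi$ on $Y$ so that $\phi V=\phi V\psi$, and the $D^*$-covering condition makes $\phi V\psi$ compact); this both shows that $\mathrm{Ad}_V$ really lands in $D^*(Y\subset M;A)$ and suggests that modulo $C^*$ the image of $D^*(Y;A)$ exhausts $D^*(Y\subset M;A)$, since every operator there is already locally compact off $Y$. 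Making this precise is essentially the local (``excision'') nature of the Paschke-dual picture of K-homology, which is what Siegel exploits.
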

\begin{proof}
  This is proved for $A=\complexs$ in \cite{P.Siegel}, the proof carries over
  almost literally to general $A$.

\end{proof}
\begin{definition}
  Assume that $M=M_1\cup M_2$ with intersection $M_0:=M_1\cap M_2$ for closed
  subsets $M_1,M_2$. This decomposition is called \emph{coarsely excisive} if
  for each $r>0$ there is $s>0$ such that $U_r(M_1)\cap U_r(M_2)\subset
  U_s(M_0)$. 
\end{definition}

\begin{theorem}\label{theo:MV}
  Assume that $M$ is a complete Riemannian manifold with a coarsely excisive
  decomposition $M=M_1\cup M_2$ into closed subset. Then we have long exact
  Mayer-Vietoris sequences
  \begin{equation*}
    \begin{split}
\dots  &\to K_j(C^*(M_1;A))\oplus K_j(C^*(M_2;A))\to K_j(C^*(M;A))\xrightarrow{\delta_{MV}}
      K_{j-1}(C^*(M_0;A))\to\dots \\
 \dots  &      \to K_j(D^*(M_1;A))\oplus K_j(D^*(M_2;A))\to K_j(D^*(M;A))\xrightarrow{\delta_{MV}}
      K_{j-1}(D^*(M_0;A))\to\dots \\
\dots   &       \to K_j(M_1;A)\oplus K_j(M_2;A)\to K_j(M;A)\xrightarrow{\delta_{MV}}
      K_{j-1}(M_0;A)\to\dots \\
    \end{split}
  \end{equation*}
  These long exact sequences are compatible with the long exact sequences in
  K-theory of the 
  extensions $0\to C^*\to D^*\to D^*/C^*\to 0$. 
\end{theorem}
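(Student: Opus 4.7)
The plan is to produce, for each of the three algebras $B\in\{C^*(\cdot;A),\,D^*(\cdot;A),\,D^*(\cdot;A)/C^*(\cdot;A)\}$, a short exact sequence of $C^*$-algebras
\[
0\to B(M_0\subset M)\xrightarrow{x\mapsto(x,-x)} B(M_1\subset M)\oplus B(M_2\subset M)\xrightarrow{(x,y)\mapsto x+y} B(M)\to 0,
\]
and then read off the Mayer--Vietoris sequence from the associated six-term K-theory sequence, using the identifications $K_*(B(M_i\subset M))\cong K_*(B(M_i))$ (and similarly for $M_0$) provided by Proposition~\ref{prop:rel_K}. Naturality of the K-theory six-term sequence, applied to the evident commutative diagram of ideal extensions $0\to C^*\to D^*\to D^*/C^*\to 0$ in the three algebras above, automatically delivers the compatibility of the three Mayer--Vietoris sequences asserted at the end of the theorem.

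The heart of the argument is to verify, writing $I=C^*(M_1\subset M;A)$, $J=C^*(M_2\subset M;A)$, $K=C^*(M_0\subset M;A)$, the two algebraic identities $I+J=C^*(M;A)$ and $I\cap J=K$. Fix a continuous partition of unity $\chi_1+\chi_2=1$ on $M$ with $0\le\chi_i\le 1$ and $\supp(\chi_i)\subseteq U_\varepsilon(M_i)$. For $T\in C^*(M;A)$ of finite propagation $r$, decompose
\[
T=\chi_1 T\chi_1+\chi_2 T\chi_2+\chi_1 T\chi_2+\chi_2 T\chi_1.
\]
The first summand is supported near $M_1$ on both sides, hence lies in $I$; the second lies in $J$ analogously. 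For each cross term $\chi_i T\chi_j$ with $i\ne j$, the output sits in $U_\varepsilon(M_i)\cap U_{r+\varepsilon}(M_j)$ and the input side is supported in $U_\varepsilon(M_j)$, so by coarse excisiveness both one-sided supports lie in $U_s(M_0)$ for some $s=s(r,\varepsilon)$; the cross terms therefore belong to $K\subseteq I\cap J$. Taking norm limits gives $I+J=C^*(M;A)$. Conversely, if $T\in I\cap J$ has finite propagation and $\phi\in C_{\mathrm{comp}}(M)$ has $\supp(\phi)\cap U_R(M_0)=\emptyset$ for $R$ sufficiently large, coarse excisiveness allows one to split $\phi=\phi_1+\phi_2$ with $\supp(\phi_i)\cap U_{R'}(M_i)=\emptyset$; then $T\phi_i=0$ for $i=1,2$ by the assumption that $T$ has support near $M_i$, and likewise on the other side, so $T\in K$.

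The very same decomposition proves the sum and intersection formulas for $D^*(\cdot;A)$: multiplication by the bounded continuous functions $\chi_i$ preserves pseudolocality, and the cross terms are even locally compact, hence actually lie in $K\subset C^*(M;A)$. The short exact sequence for $D^*/C^*$ then follows from the five lemma applied to the resulting commutative diagram of short exact sequences. The only genuinely delicate step is the support bookkeeping needed to confirm that the cross terms $\chi_i T\chi_j$ lie in the relative algebra $K$ in the sense of Definition~\ref{def:rel_algs} (not merely that they have approximately the correct support); everything else is formal manipulation of six-term sequences and Proposition~\ref{prop:rel_K}.
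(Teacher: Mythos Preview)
Your proposal follows exactly the route of the cited reference \cite{P.Siegel} (and, for $C^*$ alone, of \cite{HigsonRoeYu}); the paper itself gives no argument beyond that citation and the remark that everything carries over verbatim with coefficients $A$. So in substance you and the paper agree, and your sketch is in fact more informative than what the paper records.

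One point deserves correction. In the $D^*$ case your claim that the cross terms $\chi_iT\chi_j$ are locally compact is not right: for $\phi\in C_{\mathrm{comp}}(M)$ one has $\chi_1T\chi_2\phi=\chi_1(\chi_2\phi)T+\chi_1[T,\chi_2\phi]$, and the first summand need not be $A$-compact when $\supp(\phi)$ meets $M_0$. What is true, and what you actually need, is that $\chi_1T\chi_2$ lies in $D^*(M_0\subset M;A)$; the local compactness \emph{off} $M_0$ follows because for $\phi$ with $\supp(\phi)\cap M_0=\emptyset$ the function $\chi_1\cdot(\chi_2\phi)$ vanishes. Relatedly, to get $\chi_iT\chi_i\in D^*(M_i\subset M;A)$ (which demands local compactness off $M_i$, not just pseudolocality) you must take the partition of unity with $\supp(\chi_i)\subset M_i$ rather than merely $\supp(\chi_i)\subset U_\varepsilon(M_i)$; then $\phi\chi_i=0$ for any $\phi$ disjoint from $M_i$ and the required compactness is immediate. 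With this choice the simpler two-term splitting $T=\chi_1T+\chi_2T$ already does the job for both $C^*$ and $D^*$, and the four-term decomposition is unnecessary. Finally, the passage to the quotient $D^*/C^*$ is via the $3\times 3$ (nine) lemma rather than the five lemma, but that is purely terminological.
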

\begin{proof}
  For $A=\complexs$, this is the main result of \cite[Section
  3]{P.Siegel}. The proof carries over almost literally to general $A$.
\end{proof}

\begin{corollary}\label{corol:KRq}
  For an arbitrary  complete Riemannian manifold
  $M$ there is a commuting diagram with horizontal isomorphisms
  \begin{equation*}
    \begin{CD}
      K_*(D^*(M\times \reals;A)) @>{\delta_{MV}}>{\iso}> K_{*-1}(D^*(M;A))\\
        @VVV @VVV\\
 K_*(D^*(M\times \reals;A)/C^*(M\times\reals;A)) @>{\delta_{MV}}>{\iso}> K_{*-1}(D^*(M;A)/C^*(M;A))\\
        @VVV @VVV\\
  K_{*-1}(C^*(M\times \reals;A)) @>{\delta_{MV}}>{\iso}> K_{*-2}(C^*(M;A)).
    \end{CD}
  \end{equation*}
  In particular, for arbitrary $j,q$ we obtain a canonical isomorphism
  \begin{equation*}
    \kappa \colon K_j(C^*(\reals^q;A))\to K_{j-q}(A).
  \end{equation*}
\end{corollary}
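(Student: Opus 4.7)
The plan is to apply the Mayer-Vietoris sequences of Theorem \ref{theo:MV} to the decomposition
\[M \times \reals = \bigl(M \times (-\infty, 0]\bigr) \cup \bigl(M \times [0, \infty)\bigr),\]
whose intersection is $M \times \{0\} \iso M$. Coarse excisiveness is immediate: any point in $U_r(M \times (-\infty,0]) \cap U_r(M \times [0,\infty))$ has its $\reals$-coordinate within distance $r$ of both half-lines, hence within distance $r$ of $0$, so one may take $s = r$. Both closed halves $M \times [0, \infty)$ and $M \times (-\infty, 0]$ are flasque via the translations $(x, t) \mapsto (x, t \pm 1)$, exactly as in the Example preceding Proposition \ref{prop:flasque_vanishing}.

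By Proposition \ref{prop:flasque_vanishing}, all three flavors of K-theory ($C^*$, $D^*$, and $D^*/C^*$) therefore vanish on each half-space. Inserting these vanishings into the three long exact Mayer-Vietoris sequences of Theorem \ref{theo:MV} collapses each one to a single isomorphism $\delta_{MV}$, which supplies the three horizontal arrows of the diagram. The commutativity of the two squares is exactly the compatibility assertion at the end of Theorem \ref{theo:MV}, which says that the three Mayer-Vietoris sequences are intertwined by the six-term sequence of the ideal inclusion $0 \to C^* \to D^* \to D^*/C^* \to 0$.

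For the iterated isomorphism $\kappa$, I would repeatedly apply the bottom row with $M$ replaced by $\reals^{q-1}, \reals^{q-2}, \dots, \reals$, obtaining
\[K_j(C^*(\reals^q; A)) \iso K_{j-1}(C^*(\reals^{q-1}; A)) \iso \dots \iso K_{j-q+1}(C^*(\reals; A)),\]
and then run Mayer-Vietoris one last time on $\reals = (-\infty, 0] \cup [0, \infty)$, the intersection now being the single point $\{0\}$. Under the stabilization convention from the Remark after the definition of the Roe algebra, $C^*(\{0\}; A)$ is the algebra of $A$-compact operators on $\ell^2(\naturals) \tensor A$, whose K-theory equals $K_{j-q}(A)$ by Morita invariance. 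The only mildly delicate point is this zero-dimensional base case, since Section \ref{section2} was set up for positive-dimensional manifolds; but the identification is forced by the definitions, so the obstacle is bookkeeping rather than mathematical substance.
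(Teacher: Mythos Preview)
Your proof is correct and follows essentially the same route as the paper: the same coarsely excisive decomposition of $M\times\reals$ into flasque half-spaces, the same appeal to Proposition~\ref{prop:flasque_vanishing} and Theorem~\ref{theo:MV}, and the same iteration down to a point with $C^*(\mathrm{pt};A)\iso A\tensor\mathbf K$. You supply more detail than the paper (the explicit coarse-excisiveness check, the source of commutativity, and the caveat about the zero-dimensional base case), but the argument is the same.
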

\begin{proof}
  The decomposition $M\times \reals=M\times  (-\infty,0]\cup
  M\times [0,\infty)$ is coarsely excisive, and the half spaces
  $M\times (-\infty,0]$, $M\times [0,\infty)$ are
  flasque. Combining  \ref{prop:flasque_vanishing} and Theorem
  \ref{theo:MV}, the Mayer-Vietoris boundary map gives an isomorphism
  \begin{equation*}
    \delta_{MV}\colon K_j(D^*(M\times \reals;A))\xrightarrow{\iso}
    K_{j-1}(D^*(M;A)),
  \end{equation*}
  and similarly for the other algebras.

   The $q$-fold iteration of this then gives an isomorphism
   $K_j(C^*(\reals^q;A))\to K_{j-q}(C^*(\reals^0;A))$ with a canonical
   isomorphism,  $C^*(pt;A)= A\tensor\mathbf K$, and of course $K_*(A\tensor
   \mathbf K)\iso K_*(A)$.
\end{proof}

Finally, we recall from \cite{Roe-index}, \cite{HankePapeSchick} how to define
the coarse index of a twisted Dirac 
operator and we mention its main properties.
Assume therefore that $M$ is a complete spin manifold
(without boundary) and $E\to M$ a Hilbert $A$-module bundle with
connection. The twisted Dirac operator $D_E$ with its natural domain is then a
self-adjoint unbounded operator on the Hilbert $A$-module
$L^2(S\tensor E)$ where $S$ is the spinor bundle of $M$. If we assume
that $M$ is even dimensional, $S=S^+\oplus S^-$ is $\integers/2$-graded and
$D_E$ is an odd operator with respect to this grading.
Let $\chi\colon \reals\to [-1,1]$ an odd continuous function
such that $\chi(t)\xrightarrow{t\to\pm\infty} \pm 1$. Then $\chi(D_E)$ is still
an odd operator. The Fourier inversion formula and unit propagation
of the wave operator imply that $\chi(D_E)\in D^*(M;A)$
and that $\chi(D_E)^2-1\in C^*(M;A)$, compare \cite{HankePapeSchick}.

\begin{definition}
  If $\dim(M)$ is even, choose a measurable fiberwise isometry $S^+\to S^-$ with induced $A$-linear
  isometry of 
  propagation zero $V\colon L^2(S^+\tensor E)\to L^2(S^-\tensor E)$.

  Because $\chi(D_E)^2-1\in C^*(M;A)$, the operator $V^*\chi(D_E)_+\colon
  L^2(S_+\tensor E)\to L^2(S_+\tensor E)$ 
  is a unitary operator in
  $D^*(M;A)/C^*(M;A)$ and therefore represents a class $[D_E] \in
  K_1(D^*(M;A)/C^*(M;A)) = K_1(M;A)$, the \emph{fundamental K-homology class}.

  One has the long exact sequence in K-theory associated to the extension
  \begin{equation*}
  0\to C^*(M;A)\to D^*(M;A)\to D^*(M;A)/C^*(M;A)\to 0
\end{equation*}
with boundary map
  $\partial\colon K_1(D^*(M;A)/C^*(M;A))\to K_0(C^*(M;A))$. The \emph{large
    scale index} is defined to be 
  $\ind_c(D_E):=\partial([D_E])\in K_0(C^*(M;A))$.
  Because of homotopy invariance of K-theory, $[D_E]$ as well as $\ind_c(D_E)$
  does not depend on the choices made.

  If the dimension of $M$ is odd, $[(\chi(D_E)-1)/2] \in D^*(M;A)/C^*(M;A)$ is
  a projector and therefore represents a fundamental class $[D_E] \in
  K_0(M;A)$. 
  In this case the large scale index is defined by 
  $\ind_c(D_E):= \partial([D_E])\in K_1(C^*(M;A))$.
\end{definition}

\begin{remark}\label{rem:MF}
Let $M$ be a compact, closed, even dimensional spin manifold with Dirac
operator $D_E$ acting on  
$L^2(S\otimes E)$. Let $\mathbf B$ and $\mathbf K$ denote respectively the
space of all bounded 
and all compact operators on a separable Hilbert space $\mathbb H$. It is
clear that $C^*(M)=\mathbf K$.  
Therefore the coarse index $\ind_c (D_E)$ being an element in $K_0(\mathbf
K)=\Z$ is an integer. This is the usual Fredholm index of
$D_E$, compare \cite[Example after Definition 3.7]{Roe-index}.
\end{remark}

We recall the following well known vanishing result (compare
\cite{Roe-index,HankePapeSchick}). 
\begin{proposition}
  Let $M$ be a complete Riemannian spin manifold and $E$ a Hilbert $A$-module 
  bundle on $M$ as above.
  If $0$ is not in the spectrum of $D_E$ (in particular by the
  Schr\"odinger-Lichnerowicz formula  if $M$ has scalar
  curvature $\ge C>0$ and  $E$ is flat), then 
  \begin{equation*}
    \ind_c(D_E) = 0 \in K_{\dim M}(C^*(M;A)).
  \end{equation*}
\end{proposition}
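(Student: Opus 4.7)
The plan is to use the freedom in the choice of normalizing function $\chi$ to produce a genuine lift of the fundamental class $[D_E]$ from $K_*(D^*(M;A)/C^*(M;A))$ to $K_*(D^*(M;A))$; by the six-term sequence associated to the extension $0\to C^*(M;A)\to D^*(M;A)\to D^*(M;A)/C^*(M;A)\to 0$, the existence of such a lift forces $\ind_c(D_E)=\partial([D_E])=0$ in $K_{\dim M}(C^*(M;A))$.

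Since $0\notin\spec(D_E)$ and the spectrum is closed, there is $\epsilon>0$ with $\spec(D_E)\subset(-\infty,-\epsilon]\cup[\epsilon,\infty)$; under the Schr\"odinger-Lichnerowicz hypothesis, flatness of $E$ combined with $s(g)\ge C>0$ gives $D_E^2\ge C/4$, so $\epsilon=\sqrt{C}/2$ works. I would then choose an odd continuous $\chi\colon\R\to[-1,1]$ with the usual asymptotics $\chi(t)\to\pm 1$ as $t\to\pm\infty$ \emph{and additionally} $\chi(t)=\pm 1$ for all $\pm t\ge\epsilon$. The same Fourier-inversion and unit-propagation argument used in the definition of the coarse index still places $\chi(D_E)$ in $D^*(M;A)$, but now functional calculus together with $\chi^2\equiv 1$ on $\spec(D_E)$ upgrades the relation $\chi(D_E)^2-1\in C^*(M;A)$ to the exact identity $\chi(D_E)^2=1$.

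In the even-dimensional case, since $\chi$ is odd and $D_E$ anticommutes with the $\Z/2$-grading, $\chi(D_E)$ is block-antidiagonal with off-diagonal block $U:=\chi(D_E)_+\colon L^2(S^+\tensor E)\to L^2(S^-\tensor E)$; self-adjointness of $\chi(D_E)$ together with $\chi(D_E)^2=1$ yields $U^*U=1=UU^*$, so $U$ is a genuine unitary of Hilbert $A$-modules. Consequently $V^*U$ is a unitary in $D^*(M;A)$ itself whose image in $D^*(M;A)/C^*(M;A)$ is the representative of $[D_E]$; it therefore defines a lift of $[D_E]$ to $K_1(D^*(M;A))$. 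In the odd-dimensional case the same input makes $(1-\chi(D_E))/2\in D^*(M;A)$ a genuine self-adjoint projector lifting $[D_E]\in K_0(D^*(M;A)/C^*(M;A))$. Either way, $\partial([D_E])=0$.

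The only point requiring care is the verification that $\chi(D_E)\in D^*(M;A)$ for this specific $\chi$, and this is the main (though still routine) technical step: one approximates $\chi$ by smooth functions whose derivatives have compactly supported Fourier transforms, writes $\chi(D_E)$ via the wave operator $e^{itD_E}$, and uses finite propagation plus local $A$-compactness exactly as in the construction recalled before the statement. I do not expect any conceptually new obstacle here.
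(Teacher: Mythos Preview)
Your proposal is correct and follows essentially the same route as the paper: choose $\chi$ with $\chi^2\equiv 1$ on $\spec(D_E)$ so that the representative of $[D_E]$ is already a genuine unitary (even case) or projector (odd case) in $D^*(M;A)$, hence $[D_E]$ lifts along $K_*(D^*(M;A))\to K_*(D^*(M;A)/C^*(M;A))$ and $\ind_c(D_E)=\partial([D_E])=0$ by exactness. You have simply spelled out the parity cases and the technical membership $\chi(D_E)\in D^*(M;A)$ in more detail than the paper does.
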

\begin{proof}
  In this case, we can use for $\chi$ a function such that $\chi^2=1$ on the
  spectrum of $D_E$. Consequently, the formula defining $[D_E]$ shows that
  this class canonically lifts to a class $\rho(D_E)\in K_{\dim
    M+1}(D^*(M;A))$. Because of the exactness 
  of the K-theory sequence
  \begin{equation*}
    K_*(D^*(M;A))\to K_{*-1}(M;A)\xrightarrow{\partial} K_{*-1}(C^*(M;A))
  \end{equation*}
  this implies that $\ind_c(D_E)= \partial([D_E])=0$.
\end{proof}

\begin{proposition}\label{prop:MV_ind}
  Given a complete Riemannian spin manifold $M$ of dimension $n$ with
  Hermitean bundle 
  $E\to M$, write $E$ also for the pullback to $M\times \reals$. The
  Mayer-Vietoris isomorphism $\delta_{MV}$ of Corollary
  \ref{corol:KRq} sends $[D_{M\times \reals,E}]$ to $[D_{M,E}]$ and
  consequently also $\ind_c(D_{M\times\reals,E})$ to $\ind_c(D_{M,E})$:
  \begin{equation*}
    \begin{CD}
      K_{n+1}(M\times\reals) @>{\delta_{MV}}>{\iso}> K_n(M); &&
      [D_{M\times\reals}] \mapsto [D_{M,E}]\\
     @VV{\partial}V @VV{\partial}V  @VVV\\
     K_{n+1}(C^*(M\times\reals)) @>{\delta_{MV}}>{\iso}> K_n(C^*(M)); &&
    \quad \ind_c(D_{M\times\reals}) \mapsto \ind_c(D)
    \end{CD}
  \end{equation*}
\end{proposition}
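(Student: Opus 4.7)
The plan is to trace $[D_{M\times\reals,E}]$ through the explicit construction of $\delta_{MV}$ coming from the coarsely excisive decomposition $M\times\reals = M\times(-\infty,0]\cup M\times[0,\infty)$ with intersection $M$. The second statement on $\ind_c$ is then immediate from naturality of the K-theory boundary map of $0\to C^*\to D^*\to D^*/C^* \to 0$ with respect to $\delta_{MV}$, which is recorded in the last clause of Theorem \ref{theo:MV}. So it suffices to prove the statement for the fundamental K-homology class.

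First I would recall from the proof of Theorem \ref{theo:MV} the explicit recipe for $\delta_{MV}$: given a class $[u]\in K_{n+1}(D^*(M\times\reals;A)/C^*(M\times\reals;A))$ represented by a pseudolocal unitary lift $u\in D^*(M\times\reals;A)$, one cuts with the characteristic function $\chi_+$ of $M\times[0,\infty)$ and reads off the commutator-type defect as an element of the ideal $D^*(M\subset M\times\reals;A)$, whose K-theory is identified with $K_*(D^*(M;A))$ by Proposition \ref{prop:rel_K}. I would then choose the chopping function $\chi$ in the functional calculus defining $[D_{M\times\reals,E}]$ to have Fourier transform of arbitrarily small support, so that by unit propagation of the wave group $\chi(D_{M\times\reals,E})$ has propagation much smaller than the scale on which $\chi_+$ varies; the commutator $[\chi_+,\chi(D_{M\times\reals,E})]$ is then $A$-compact and supported in an arbitrarily small neighborhood of $M\times\{0\}$, making the recipe for $\delta_{MV}$ manifestly applicable.

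The heart of the calculation is then a product-type decomposition: on $M\times\reals$ the twisted Dirac operator is the external product $D_{M\times\reals,E} = D_{M,E}\tensor 1 + \Gamma\tensor(-i\partial_t)$ (with $\Gamma$ the grading in the even-dimensional case), and the wave group factors accordingly. Substituting the product structure into the Fourier representation of $\chi(D_{M\times\reals,E})$, the commutator with $\chi_+$ decomposes, modulo $A$-compacts supported near $M$, as the tensor product of an operator representing $[D_{M,E}]$ on $L^2(S_M\tensor E)$ with a one-dimensional factor on the $\reals$-direction whose class corresponds under the isomorphism of Corollary \ref{corol:KRq} to the generator $1\in K_0(A\tensor\mathbf K)$. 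The hard part will be making this product decomposition rigorous at the level of explicit operator representatives in the relative Roe and structure algebras, and carefully tracking the grading signs so that the even- and odd-dimensional cases are treated uniformly; this is essentially the computation underlying the classical Higson--Roe partitioned manifold theorem in \cite{Roe-index}, here adapted to arbitrary Hilbert $A$-module coefficients.
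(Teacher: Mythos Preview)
Your plan is mathematically sound and outlines exactly the ``boundary of Dirac is Dirac'' computation that underlies this statement. Note, however, that the paper does not actually carry out this computation: its entire proof of Proposition~\ref{prop:MV_ind} is a citation of \cite[Lemma~4.6]{P.Siegel}, which in turn rests on \cite[Chapter~11]{HigsonRoe-K}. So what you are proposing is to reprove, from scratch, precisely the content of those references; the paper treats that content as an input, not as something to be redone here.

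One point of caution: you close by saying you will adapt the computation to arbitrary Hilbert $A$-module coefficients, but the proposition as stated is only for a Hermitean bundle ($A=\complexs$). The paper is explicit in Section~\ref{sec:generalization} that the general-$A$ version of this product formula is \emph{not} established here and is listed as one of the missing ingredients needed to extend Theorem~\ref{theo:part_mf} to arbitrary $A$. So either restrict your write-up to $A=\complexs$, in which case you can simply cite \cite{P.Siegel,HigsonRoe-K} as the paper does, or be aware that the general case requires genuinely new work (the paper points to the methods of \cite{Zeidler} as a likely route).
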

\begin{proof}
  This crucial property of the Dirac operator is based on the principle that
  ``boundary of Dirac is Dirac''. The proof is given in \cite[Lemma
  4.6]{P.Siegel} and is based
  on the precise meaning of ``boundary of Dirac is Dirac'' as treated in
  \cite[Chapter 11]{HigsonRoe-K}.
\end{proof}

\section{Multi-partitioned manifolds and their large scale index}
\label{sec:largescaleind}
Throughout this section, assume that $M$ is a complete Riemannian manifold
which is multi-partitioned by the 
separating hypersurfaces $M_1,\dots,M_q$. Recall that this means in particular
that the latter are coarsely transversal in the
sense of Definition \ref{def:multipart} and near their common intersection
$N:= \bigcap_{k=1}^q M_k$ the mutual intersections are transversal in the usual
sense, and such that finally $N$ is compact.
We now prove Lemma \ref{lem:map_to_Rq}. 

\begin{definition}
  We write $M=M_k^+\cup M_k^-$ for the decomposition of $M$ induced by the
  hypersurface $M_k$. Define $h_k\colon M\to \reals$ as the signed distance to
  $M_k$, i.e.~$h_k(x)=d(x,M_k)$ if $x\in M_k^+$ and $h_k(x)=-d(x,M_k)$ if
  $x\in M_k^-$. Set $f\colon M\to \reals^q; x\mapsto
(h_1(x),\dots,h_q(x))$.
\end{definition}

\begin{proof}[Proof of Lemma \ref{lem:map_to_Rq}]
  By the triangle inequality, we have
  $\abs{d(x,X)-d(y,X)}\le d(x,y)$ for an arbitrary subset $X\subset M$. Moreover, as $M$ has a length metric and
  $M_k$ is separating, $x\in M_k^+$ and $y\in M^-_k$ satisfy
  $d(x,M_k)+d(y,M_k)\le d(x,y)$. It follows that $h_k\colon M\to\reals$ is a
  $1$-Lipschitz map, therefore $f$ is  $\sqrt{q}$-Lipschitz. The
  condition that $N$ is compact and that the $M_k$ are coarsely transversal
  implies 
  that the inverse image of every bounded subset of $\reals^k$ under $f$ is
  bounded.  This finishes the proof of Lemma \ref{lem:map_to_Rq}.
\end{proof}
We have now explained all the ingredients for the statement of Theorem
\ref{theo:part_mf}. Indeed, in Section \ref{section2} we essentially already
proved the model case of this theorem, which reads as follows:

\begin{lemma}\label{lem:model_part_mf}
  Let $N$ be a compact $n-q$-dimensional spin manifold and $E\to N$ a Hermitean 
  bundle. Write $E$ also for the pullback to $N\times \reals^q$ along the
  projection $f\colon 
  N\times \reals^q\to \reals^q$. 
  For this special case, the assertion of Theorem \ref{theo:part_mf} holds.
\end{lemma}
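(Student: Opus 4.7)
The plan is to iterate Proposition \ref{prop:MV_ind} exactly $q$ times, peeling off one $\reals$-factor at each step, in order to reduce the coarse index on $N \times \reals^q$ to the coarse index on the compact manifold $N$. By Remark \ref{rem:MF} the latter coincides with the classical Fredholm index $\ind(D_{N, E|_N})$, which is the desired conclusion.

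The first step is to establish naturality of the Mayer-Vietoris boundary under the projection $f\colon N \times \reals^q \to \reals^q$. This map is proper (since $N$ is compact) and $1$-Lipschitz, hence uniformly expansive, so $f_*$ is defined on K-theory of Roe algebras. Since $f$ carries the coarsely excisive decomposition
\[
N\times\reals^q = \bigl(N\times\reals^{q-1}\times(-\infty,0]\bigr) \cup \bigl(N\times\reals^{q-1}\times[0,\infty)\bigr)
\]
to the analogous decomposition of $\reals^q$, one may choose product-type covering isometries that respect both decompositions. This yields a commuting square
\[
\begin{CD}
K_j(C^*(N\times\reals^q)) @>{f_*}>> K_j(C^*(\reals^q)) \\
@VV{\delta_{MV}}V @VV{\delta_{MV}}V \\
K_{j-1}(C^*(N\times\reals^{q-1})) @>{f_*}>> K_{j-1}(C^*(\reals^{q-1})).
\end{CD}
\]

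Iterating this square $q$ times, and combining with Proposition \ref{prop:MV_ind} at each stage (which sends $\ind_c(D_{N\times\reals^k, E})$ through $\delta_{MV}$ to $\ind_c(D_{N\times\reals^{k-1}, E})$), one obtains from the definition of $\kappa$ the identity
\[
\kappa\bigl(f_*(\ind_c(D_{N\times\reals^q, E}))\bigr) = h_*\bigl(\ind_c(D_{N, E|_N})\bigr),
\]
where $h\colon N\to\mathrm{pt}$ is the constant map. Since $N$ is compact, $C^*(N)$ is canonically isomorphic to the algebra $\mathbf{K}$ of compact operators on a separable Hilbert space, and $h_*$ becomes the canonical isomorphism $K_*(\mathbf{K})\iso K_*(\complexs)$. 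Under this identification, Remark \ref{rem:MF} says that $\ind_c(D_{N, E|_N})$ is exactly the Fredholm index $\ind(D_{N, E|_N})$, completing the argument.

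The main obstacle is the naturality square in the first step: although conceptually straightforward (the projection tautologically respects the half-space decompositions), the construction of $\delta_{MV}$ from the extensions underlying Theorem \ref{theo:MV} involves auxiliary choices, and one must verify these can be made compatibly with the covering isometries used to define $f_*$. This is bookkeeping rather than genuinely new mathematics, but it is the only nontrivial verification beyond direct invocation of earlier results.
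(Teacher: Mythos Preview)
Your proof is correct and follows essentially the same route as the paper: both iterate the Mayer-Vietoris boundary $\delta_{MV}$ exactly $q$ times, use its naturality with respect to the projection $f$ to obtain a commuting ladder reducing to $K_{n-q}(C^*(N))\to K_{n-q}(C^*(\mathrm{pt}))$, apply Proposition~\ref{prop:MV_ind} on the left column, and finish with Remark~\ref{rem:MF}. The only difference is emphasis: the paper invokes naturality of the Mayer-Vietoris sequence in a single sentence, whereas you flag this compatibility as the one place needing verification and sketch why it holds (product-type covering isometries respecting the half-space decompositions).
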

\begin{proof}
  By naturality of the Mayer-Vietoris sequence, the following
 diagram is commutative:
  \begin{equation*}
  \begin{CD}
     K_{n}(C^*(N\times \reals^q)) @>{f_*}>> K_{n}(C^*(\reals^q))\\
     @V{\iso}V{\delta_{MV}^q}V  @V{\iso}V{\delta_{MV}^q}V\\
    K_{n-q}(C^*(N)) @>{pr_*}>{\iso}> K_{n-q}(C^*(\reals^0))\\
      @VV{\iso}V @VV{\iso}V \\
    K_{n-q}(\complexs) @>=>> K_{n-q}(\complexs)
    \end{CD}
  \end{equation*}
   By definition, the right vertical composition is $\kappa$ so that
   $\ind_p(D_{N\times\reals^q,E})\in \Z$ is the image of
   $\ind_c(D_{N\times\reals^q,E})$ under the map to the right lower corner.
   However, by Proposition \ref{prop:MV_ind},
   \begin{equation*}
   \delta_{MV}^q(\ind_c(D_{N\times\reals^q,E}))= \ind_c(D_{N,E})\in
   K_{n-q}(C^*(N)),
 \end{equation*}
and the latter is mapped to $\ind(D_{N,E})\in K_{n-q}(\complexs)$
   under 
   the isomorphism $K_{n-q}(C^*(N))\to K_{n-q}(\C)$ by Remark
   \ref{rem:MF}. Note that $K_{n-q}(\C)=\Z$ if $n-q$ is even, and
   $K_{n-q}(\C)=\{0\}$ if $n-q$ is odd in which case the last statement is empty.
\end{proof}

The main novelty of this note is the localization result for the partitioned
manifold index. It follows from the following localization result for the
K-theory of $C^*(\reals^n)$.

\begin{definition}
  Two operators $T_1,T_2\in D^*(\reals^q;A)$ are said to coincide on an open
  set $U\subset \reals^q$ if and only if $T_1s=T_2s$ for all $s$ with
  $\supp(s)\subset U$. 
\end{definition}

\begin{proposition}\label{prop:loca}
  Let $T_1,T_2\in D^*(\reals^q)$ be two operators which coincide on a
  non-empty open set $U\subset\reals^q$. Assume that $[T_1]$ and $[T_2]$
  represent elements in $K_j(D^*(\reals^q;A)/C^*(\reals^q;A))$, i.e.~are
  either idempotents (for $j$ even) or invertible (for $j$ odd) modulo
  $C^*(\reals^q;A)$.

  Just from the fact that $T_1$ and $T_2$ coincide on $U$, it then follows
  that
  \begin{equation*}
[T_1]=[T_2] \in K_j(D^*(\reals^q;A)/C^*(\reals^q;A)).
\end{equation*}

\end{proposition}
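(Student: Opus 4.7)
The plan is to realize the difference $[T_1]-[T_2]$ as coming from an ideal inside $Q:=D^*(\reals^q;A)/C^*(\reals^q;A)$ whose K-theory vanishes, so that $[T_1]=[T_2]$ automatically. This reduces the problem to a combination of flasqueness for a certain complement, the relative K-theory isomorphism of Proposition \ref{prop:rel_K}, and a short exact sequence argument.

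The first step is to pick a closed ball $B$ inside the open set $U$ and prove that the complement $\reals^q\setminus B$ is coarsely flasque. The radial push-out $f(x)=(|x|+1)x/|x|$, together with the homotopy $F(x,t)=(|x|+t)x/|x|$ to the identity, verifies the conditions of flasqueness: $f$ is continuous, proper, uniformly expansive, and its iterates $f^n$ send $\reals^q\setminus B$ outside any given compact set. By Proposition \ref{prop:flasque_vanishing} we conclude $K_*(C^*(\reals^q\setminus B;A))=K_*(D^*(\reals^q\setminus B;A))=0$. Proposition \ref{prop:rel_K} then transfers these vanishings to the ambient-space variants
\[
\tilde C^*:=C^*((\reals^q\setminus B)\subset \reals^q;A),\qquad \tilde D^*:=D^*((\reals^q\setminus B)\subset \reals^q;A),
\]
and the six-term sequence of the extension $0\to \tilde C^*\to \tilde D^*\to \tilde Q\to 0$ yields $K_*(\tilde Q)=0$, where $\tilde Q:=\tilde D^*/\tilde C^*$. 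Since $\tilde D^*$ is an ideal in $D^*(\reals^q;A)$ and $\tilde C^*=\tilde D^*\cap C^*(\reals^q;A)$, the algebra $\tilde Q$ sits as an ideal inside $Q$.

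The second step is to show that $T_1-T_2$ represents a class in this ideal $\tilde Q$. Choose a cutoff $\phi\in C_c(U)$ with $\phi\equiv 1$ on $B$. The hypothesis that $T_1$ and $T_2$ coincide on $U$ gives $(T_1-T_2)\phi=0$ literally, and pseudolocality of the $T_i$ gives $\phi(T_1-T_2)=(T_1-T_2)\phi+[\phi,T_1]-[\phi,T_2]\in C^*(\reals^q;A)$. Hence modulo $C^*(\reals^q;A)$ we have
\[
T_1-T_2 \equiv (1-\phi)(T_1-T_2)(1-\phi).
\]
The operator on the right has support near $\reals^q\setminus B$, since $(1-\phi)\eta=0$ whenever $\supp\eta\subset B$, and for the same reason it satisfies the local $A$-compactness condition near $B$ required by Definition \ref{def:rel_algs} tautologically. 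Approximating $T_1-T_2$ by finite-propagation pseudolocal operators and sandwiching by $(1-\phi)$ produces approximants still satisfying the support-near-$\reals^q\setminus B$ and local compactness conditions, showing that this sandwiched operator lies in $\tilde D^*$, so its image in $Q$ lies in $\tilde Q$.

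Finally, from $K_*(\tilde Q)=0$ together with the six-term sequence of $0\to\tilde Q\to Q\to Q/\tilde Q\to 0$, the projection $Q\to Q/\tilde Q$ induces an isomorphism on K-theory. Since $T_1$ and $T_2$ project to the same element of $Q/\tilde Q$, they define the same K-theory class there, and hence $[T_1]=[T_2]$ in $K_j(Q)$; this handles both the even and odd parities uniformly. The step I expect to be the most delicate is the identification $(1-\phi)(T_1-T_2)(1-\phi)\in \tilde D^*$, which requires simultaneous control over the finite-propagation approximation of $T_1-T_2$ and the support/pseudolocality conditions defining the relative algebra; once this is in place the remainder is a routine K-theoretic argument. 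The essential coarse-geometric input is the flasqueness of $\reals^q\setminus B$, which is precisely what makes the strong localization property valid on $\reals^q$.
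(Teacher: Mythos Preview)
Your proof is correct and follows the same overall architecture as the paper's: show that $T_1-T_2$ lies (modulo $C^*(\reals^q;A)$) in the ideal coming from $D^*(Y\subset\reals^q;A)$ for $Y=\reals^q\setminus B$, verify that this ideal has trivial K-theory, and conclude via the six-term sequence that the projection $Q\to Q/\tilde Q$ is a K-theory isomorphism.

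The genuine difference is in how the vanishing $K_*(Y;A)=0$ is obtained. The paper proceeds by an inductive Mayer--Vietoris argument: it splits $\reals^q\setminus B_r(0)$ along a coordinate hyperplane into two flasque halves whose intersection is $\reals^{q-1}\setminus B_r(0)$, and then has to treat the base case $q=1$ by a separate ad hoc argument because that decomposition is no longer coarsely excisive. Your direct verification that $\reals^q\setminus B$ is itself flasque via the radial push-out is more economical and handles all $q$ at once. Two small adjustments: center the push-out at the center of $B$ (or translate first), since the formula $(|x|+1)x/|x|$ is undefined at the origin and nothing guarantees $0\in B$; and take $B$ to be an open ball with closure in $U$, so that $Y=\reals^q\setminus B$ is a closed subset as required in Definition~\ref{def:rel_algs}.

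One simplification worth noting: the sandwiching by $(1-\phi)$, which you flag as the delicate step, is actually unnecessary. The paper observes that $T_1-T_2$ itself already lies in $D^*(Y\subset\reals^q;A)$. The support-near-$Y$ condition is vacuous because $U_R(Y)=\reals^q$ once $R$ exceeds the radius of $B$; and for any $\psi\in C_{comp}(\reals^q)$ with $\supp(\psi)\cap Y=\emptyset$, i.e.\ $\supp(\psi)\subset B\subset U$, one has $(T_1-T_2)\psi=0$ by hypothesis and then $\psi(T_1-T_2)=[\psi,T_1-T_2]$ is compact by pseudolocality. So no approximation-and-sandwich manoeuvre is needed.
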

\begin{proof}
  By translation invariance of $K_*(D^*(\reals^q;A))$ and making $U$ smaller,
  if necessary, we can assume that $U=B_r(0)$ for some $r>0$.

  We use the auxiliary space $\reals^q\setminus U$. We apply the
  Mayer-Vietoris principle to the decomposition 
  \begin{equation*}
\reals^q\setminus B_r(0) =
  (\reals^q_-\setminus B_r(0) )\cup (\reals^q_+\setminus B_r(0))
\end{equation*}
  The intersection is $\reals^{q-1}\setminus B_r(0)$ and the half spaces are
  flasque. For $q>1$ the decomposition is coarsely excisive and, using
  Proposition \ref{prop:flasque_vanishing} and Theorem \ref{theo:MV} we get a
  Mayer-Vietoris isomorphism
  \begin{equation*}
    K_*(\reals^q\setminus B_r(0);A)\to K_{*-1}(\reals^{q-1}\setminus B_r(0);A).
  \end{equation*}

  Finally, in the case $q=1$, write $\reals':=(-\infty,-r]\cup
  [r,\infty)\subset\reals$.  Recall that for arbitrary metric space $X$ and
  $C^*$-algebra $A$ every element $T\in D^*(X;A)$ can be written as a sum of
  an operator $T_1\in D^*(X;A)$ of arbitrarily small propagation and $T_2\in
  C^*(X;A)$. Using the notation of  
  Definition \ref{def:rel_algs}, therefore $D^*(\reals';A)/C^*(\reals';A)$
  decomposes as a direct sum 
  \begin{equation*}
 (D^*((-\infty,-r]\subset
             \reals';A)+C^*(\reals';A))/C^*(\reals';A) \oplus
             (D^*([r,\infty);A)+C^*(\reals';A))/C^*(\reals';A).
           \end{equation*}
By the Noether isomorphism theorems for the two summands we have
\begin{gather*}
 (D^*((-\infty,-r]\subset \reals';A)+C^*(\reals';A))/C^*(\reals';A) \iso
             D^*((-\infty,-r];A)/C^*((-\infty,-r];A),\\
 (D^*([r,\infty)\subset \reals';A)+C^*(\reals';A))/C^*(\reals';A)\iso
             D^*([r,\infty);A)/C^*([r,\infty);A).
\end{gather*}

  Therefore 
    \begin{equation*}
      K_*(\reals';A) = K_{*}((-\infty,-r];A) \oplus K_*([r,\infty);A)= 0
    \end{equation*}
  again using that the half line is flasque. 
(As we are computing locally finite K-homology here, the Mayer-Vietoris
sequence holds for 
general, not necessarily coarsely excisive, decompositions which would
immediately implies the statement. We avoid the use of it because we did not
establish this general Mayer-Vietoris sequences for locally finite K-homology
with coefficients in the $C^*$-algebra $A$.)

  By assumption, $T_1,T_2$ coincide on $U$. We claim that this implies that
  $T_1-T_2\in D^*(Y\subset \reals^q;A)$ with $Y=\reals^q\setminus U$. The support condition is
  automatic, as $U_r(Y)=\reals^q$. If $\phi\colon\reals\to\complexs$ has
  support on $U$ then $(T_1-T_2)\phi=0$ by
  assumption, therefore $(T_1-T_2)\phi$ is compact. Because $T_1,T_2$ are in
  $D^*(\reals^q;A)$, 
  the commutator $[\phi,T_1-T_2]$ is compact. This gives then the required
  remaining compactness of $\phi(T_1-T_2)$.

  From this, we conclude that the images of $T_1$ and $T_2$ in
  $D^*(\reals^q;A)/(D^*(Y\subset\reals^q;A)+C^*(\reals^q;A))$ coincide.

  By Proposition \ref{prop:rel_K},
  \begin{equation*}
    K_*(D^*(Y\subset \reals^q;A)/C^*(Y\subset \reals^q;A)) \iso K_{*-1}(Y;A) = 0.
  \end{equation*}
  Therefore, the long exact K-theory sequence of the extension
  \begin{multline*}
    0\to D^*(Y\subset\reals^q;A)/C^*(Y\subset\reals^q;A) \to
    D^*(\reals^q;A)/C^*(\reals^q;A)\\
\to
    D^*(\reals^q;A)/(D^*(Y\subset\reals^q;A)+C^*(\reals^q;A))\to 0
  \end{multline*}
  gives the isomorphism, induced by the projection
  \begin{equation*}
    K_*(\reals^q;A)\xrightarrow{\iso}
    K_{*+1}(D^*(\reals^q;A)/(D^*(Y\subset\reals^q;A)+C^*(\reals^q;A))).
  \end{equation*}
  We observed above that the images of $T_1$ and $T_2$ in the right hand
  algebra coincide. Because of the isomorphism, $[T_1]=[T_2]\in
  K_*(\reals^q;A)$, as we had to prove.
\end{proof}

The localization theorem, Proposition \ref{prop:loc_ind}, now is a rather
direct corollary, as we want to prove next. Assume therefore the situation of
Proposition \ref{prop:loc_ind}, with two manifolds $f\colon M\to \reals^q$,
$f'\colon M'\to \reals^q$ together with Hilbert $A$-module bundles with
connection which are locally isometric on open neighborhoods 
$U,U'$ of the inverse images $N,N'$ of $0$ via isometries $\psi,\Psi$. As 
$f$ is proper and continuous and $U$ is an open neighborhood of
$N$ (and the corresponding situation for $M'$), if
we choose $t>0$ sufficiently small then $f^{-1}(B_t(0))\subset U$ and
$(f')^{-1}(B_t(0))\subset U'$. Choose $r>0$ such that
$U_r(f^{-1}(B_{t/2}(0)))\subset f^{-1}(B_t(0))$. Because $U,U'$ are isometric,
the same is then also true for $M'$. Next, choose a smooth chopping function
$\chi$ as for
the definition of $[D_E]$ such that its Fourier transform $\hat\chi$ (which is
a distribution which is smooth outside $0$) has support in $(-r/4,r/4)$. By
the Fourier inversion formula and unit propagation speed of the wave operator
(which implies that $\chi(D_E),  
\chi(D_{E'})$ have propagation $r/4$), 
$\chi(D_E)s= \Psi^{-1}\chi(D_{E'})\Psi s$ for each $s$ with support in
$f^{-1}(B_{t/2}(0))$. Next, for the construction of $f_*\colon D^*(M;A)\to
D^*(\reals^q;A)$ and $f'_*\colon D^*(M';A)\to D^*(\reals^q;A)$ choose
isometries $V,V'$ as in Definition \ref{def:functor} with propagation smaller
than $r/4$. These isometries can be constructed locally and patched
together. We can
therefore in addition arrange that
\begin{equation}\label{eq:supp}
Vs=V'\Psi s \quad\text{if} \qquad \supp(s)\subset
U_{3r/4}(f^{-1}(B_{t/2}(0)))\subset U.
\end{equation}
As
$\innerprod{V^*u,s}_{L^2(S\tensor E)}=\innerprod{u,Vs}_{L^2(\reals^q)}$, 
the fact that $V$ has propagation $r/4$
implies that if $\supp(u)\subset B_{t/2}(0)$ then the support of $V^*u$ is
contained in $U_{r/4}(f^{-1}(B_{t/2}(0)))$. Then Equation \eqref{eq:supp}
implies that $\Psi V^* u = (V')^* u$ for these $u$.

Taken together, we get that $V\chi(D_E)V^* u =
V'\chi(D_{E'})(V')^*u$ if 
$u$ is supported on $B_{t/2}(0)$. This implies that $f_*((\chi(D_E)+1)/2)$ and
$f'_*((\chi(D_{E'})+1)/2)$ coincide on $B_{t/2}(0)\subset\reals^q$. If $M$ has
even 
dimension, in addition we have to choose the measurable bundle isomorphisms $V_S,V'_{S'}$
between the positive and negative spinor bundles. Again, this construction is
local and we can therefore arrange that for sections supported on $U$ the
isometry $\Psi$ intertwines these bundle isomorphisms, i.e.~$\Psi V_Ss =
V'_{S'}\Psi s$ if $\supp(s)\subset U$.

It then follows that, if $u$ is supported on $B_{t/2}(0)\subset \reals^q$ then
\begin{equation*}
f_*(V_S^*\chi(D_E)_+)u\stackrel{\text{Def}}{=} V (V_S^*\chi(D_E)_+)V^* u= V'
(V'_{S'}\chi(D_{E'})_+)(V')^* u = f'_*(V'_{S'}\chi(D_{E'})_+)u.
\end{equation*}

To summarize: in all dimensions the representatives classes $f_*[D_E]$ and
$f'_*([D_{E'}'])$ 
coincide on the non-empty open subset $B_{t/2}(0)\subset\reals^q$. By
Proposition \ref{prop:loca}, $f_*([D_E])=f'_*([D_{E'}])\in K_{\dim
  M}(\reals^q;A)$. By naturality of the boundary map of the K-theory
long exact then also
\begin{equation*}
  f_*(\ind_c(D_E)) = f'_*(\ind_c(D_{E'}))\in K_{\dim M}(C^*(\reals^q;A)),
\end{equation*}
as we have to prove.

Finally, we now can give the proof of the multi-partitioned
manifold index theorem \ref{theo:part_mf}.

Given $f\colon M\to \reals^q$ as in Theorem \ref{theo:part_mf}, by homotopy
invariance of the index we can deform the metric on $M$ and connection on $E$
in a
neighborhood $U$ of $N$ such that it is isometric to a neighborhood of
$N\times \{0\}$ in $N\times \reals^q$ (with product structure) without changing
$\ind(D_{M,E})$.

By Proposition \ref{prop:loc_ind}, which we just
proved, $f_*(\ind_c(D_{M,E}))=f_*(\ind_c(D_{N\times\reals^q, E|_N}))$. But for
the latter we already proved in Lemma \ref{lem:model_part_mf} that
$\kappa(\ind_c(D_{N\times\reals^q,E|_N}))= \ind(D_{N,E|_N})\in \Z$. Therefore
Theorem \ref{theo:part_mf} is established.


We next apply the multi-partitioned manifold index theorem  to prove 
 non-existence theorems for metrics with positive scalar curvature.

\begin{lem}\label{vanlem}
 Let $M$ be a spin manifold with spinor bundle $S$ and let $E$ be a flat
 Hilbert $A$-module bundle  on $M$. 
 Let $D_E$ be a Dirac operator twisted by $E$.
 \begin{enumerate}
 \item If there is a constant $C>0$ such that the scalar curvature of $g$ is grater than 
 $C$ outside $Y$, then $\ind_c(D_E)$ is in the image of $K_*(C^*(Y\subset
 M;A))\to K_*(C^*(M;A))$. 
 \item Let $(M',g')$ be another complete manifold.
 If $f\colon M\to M'$ is a proper and uniformly expanding map with
 $f(Y)\subset Y'\subset M'$ then 
 $f_*(\ind_c D)\in K_*(C^*(M';A))$ 
 takes its value in the image  of 
 $K_*(C^*(Y'\subset M';A))\to K_*(C^*(M';A))$.
\end{enumerate}
\end{lem}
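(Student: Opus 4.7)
The plan is to establish (1) by refining the construction of $\ind_c(D_E)$ so that its defining operator already lies in $C^*(Y\subset M;A)$, and then to derive (2) from (1) by direct functoriality at the level of these relative ideals. For (1), recall that $\ind_c(D_E)=\partial[D_E]$, where $[D_E]\in K_1(D^*(M;A)/C^*(M;A))$ is represented by $\chi(D_E)$ for a suitable chopping function and $\partial$ comes from $0\to C^*(M;A)\to D^*(M;A)\to D^*(M;A)/C^*(M;A)\to 0$. The strategy is to show that $\chi$ can be chosen so that the stronger relation $(\chi(D_E))^2-1\in C^*(Y\subset M;A)$ holds; granting this, $\chi(D_E)$ represents a class in $K_1(D^*(M;A)/C^*(Y\subset M;A))$, and the boundary map of the corresponding extension produces a class in $K_0(C^*(Y\subset M;A))$ whose image in $K_0(C^*(M;A))$ equals $\ind_c(D_E)$ by naturality of the boundary map. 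The odd-dimensional case runs in parallel, with the idempotent $(\chi(D_E)+1)/2$ replacing the unitary $\chi(D_E)$.

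To establish the refined containment, use that $E$ is flat so the Schr\"odinger-Lichnerowicz formula gives $D_E^2=\nabla^*\nabla+\kappa/4$; in particular for any section $s$ supported in $M\setminus Y$ one has $\langle D_E^2 s,s\rangle\ge(C/4)\|s\|^2$. Choose $\chi$ smooth and odd with $\chi^2-1$ supported in $(-\sqrt{C}/2,\sqrt{C}/2)$, approximated by Schwartz functions whose Fourier transform has small compact support. Fourier inversion together with unit propagation speed of the wave operator then gives that the approximating operators $g(D_E)$ have finite propagation. A spectral-localization argument combining this finite-propagation bound with the pointwise SL lower bound on sections supported well inside $M\setminus Y$ yields $(\chi^2-1)(D_E)\in C^*(Y\subset M;A)$ in the limit.

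For (2), let $V$ be an $A$-linear isometry covering $f$ in the $D^*$-sense with propagation $R$. The key observation is that $\mathrm{Ad}_V$ carries $C^*(Y\subset M;A)$ into $C^*(Y'\subset M';A)$: if $T\in C^*(Y\subset M;A)$ has propagation $R_T$ and $T\phi=\phi T=0$ whenever $\mathrm{supp}(\phi)\cap U_{R_T}(Y)=\emptyset$, then uniform expansiveness of $f$ together with $f(Y)\subset Y'$ produces $s$ with $f(U_{R_T+R}(Y))\subset U_s(Y')$; for $\psi$ supported outside $U_s(Y')$ one finds $V^*\psi$ supported outside $U_{R_T}(Y)$, whence $VTV^*\psi=0$, and symmetrically $\psi VTV^*=0$. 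The induced map on K-theory therefore carries the image of $K_*(C^*(Y\subset M;A))\to K_*(C^*(M;A))$ into the image of $K_*(C^*(Y'\subset M';A))\to K_*(C^*(M';A))$, and combining with (1) yields (2).

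The main obstacle is the refined containment claim in (1). The SL formula provides only a local lower bound $D_E^2\ge C/4$ on sections supported in the PSC region; converting this into the global statement that $g(D_E)$ lies in $C^*(Y\subset M;A)$ for every $g$ supported in $(-\sqrt{C}/2,\sqrt{C}/2)$ requires a careful interplay between finite propagation of $g(D_E)$ and the spectral constraint coming from positive scalar curvature. All other steps are essentially K-theoretic bookkeeping built on the machinery of Section \ref{section2}.
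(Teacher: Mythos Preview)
Your proposal is correct and follows essentially the approach of the references the paper cites for (1) (in particular the Fourier-inversion/finite-propagation technique of \cite{Pape,HankePapeSchick}); the paper itself does not give a self-contained argument for (1) but defers to these sources, and for (2) it simply invokes ``naturality'', which is exactly what your explicit computation with $\mathrm{Ad}_V$ carries out. Your honest identification of the refined containment $(\chi^2-1)(D_E)\in C^*(Y\subset M;A)$ as the nontrivial analytic core matches the paper's decision to cite rather than reprove this step.
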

\begin{proof}
  The first part for the case $A=\C$ is in \cite[page 22]{Roe-index} without
proof.  
In \cite{RoeFriedrich} a proof of this special case is
provided using the Friedrichs extension of symmetric operators which are
bounded below.  A sketch of a proof using similar ideas for general $A$ is
given in \cite{Zadeh2}. Simultaneously, a detailed proof using Fourier
inversion techniques was given in the G\"ottingen thesis of Daniel Pape
\cite{Pape} and appeared in \cite{HankePapeSchick}. 

 The
  second part is a direct consequence of the first part and of naturality.
\end{proof}

With this Lemma, we are in the position to prove Theorem
\ref{theo:obstructmet}. Assume therefore that $f\colon M\to \reals^q$ is a
proper and uniformly expansive map defined on a complete spin manifold $M$ and
assume that $f$ is smooth near $N:=f^{-1}(0)$ such that $0$ is a regular
value. Let $D$ stands for the Dirac operator of $M$.  
If the scalar curvature of $M$ is uniformly positive on the quadrant 
$\bigcap_{k=1}^q M_k^+ = f^{-1}([0,\infty)^q)$, then by Lemma \ref{vanlem}
$f_*(\ind_c(D))$ lies in the image of $K_*(C^*(\reals^q\setminus
[0,\infty)^q\subset \reals^q))\to K_*(C^*(\reals^q)$. Now,
$\reals^q\setminus [0,\infty)^q$ is a flasque space, therefore by Proposition
\ref{prop:flasque_vanishing} $K_*(C^*(\reals^p\setminus
[0,\infty)^q))=K_*(C^*(\reals^q\setminus [0,\infty)^q\subset \reals^q))$
vanishes. 
It follows, under the positivity assumption on the scalar curvature, that
$f_*(\ind_c(D))=0$. 
On the other hand, by Theorem \ref{theo:part_mf} $\kappa
f_*(\ind_c(D))=\ind(D_N)=\hat A(N)\neq 0$. This contradiction proves the
theorem.

\section{Generalization to arbitrary coefficient $C^*$-algebras and open questions}
\label{sec:generalization}

The partitioned manifold index theorem ($q=1$) for Hilbert
$A$-module bundle coefficients is established in 
\cite{Zadeh1} and plays an important role in \cite{HankePapeSchick} in the
proof of the codimension $2$-obstruction to positive scalar
curvature. Generalizations of this obstruction to higher codimensions, which
would be very interesting, most likely would require to generalize Theorem
\ref{theo:part_mf} to arbitrary $C^*$-algebras $A$.
The most natural
example is $A=C^*(\pi_1(M))$, the (reduced or maximal) group $C^*$-algebra, and
$E=\tilde M\times_{\pi_1M} C^*(\pi_1(M)$), the Mishchenko line bundle.

Our proof of Theorem \ref{theo:part_mf} generalizes, provided the following
input is established:
\begin{enumerate}
\item the generalization of Remark \ref{rem:MF}, namely the identification of
  the coarse index of a compact space with the 
  Mishchenko-Fomenko index (or any other version classically used) ---this is
  folk knowledge, but its hard to find suitable references;
\item the product formula for the $A$-module twisted index generalizing
  Proposition \ref{prop:MV_ind}. Actually, one should establish a more general
  product formula for coarse indices as follows. We formulate it as a
  conjecture but strongly believe that it is true and that the method of proof
  of the current paper can be generalized to give a proof of the statement.
  \begin{conjecture}
    Given complete Riemannian manifolds $M_1$ with Hilbert $A_1$-module bundle  
  $E_1\to M_1$ and $M_2$ with Hilbert $A_2$-module bundle $E_2\to M_2$, the
  external tensor product of operators on $L^2(M_1, S_1\tensor
  E_1)\tensor L^2(M_2,S_2\tensor E_2)\xrightarrow{\iso}
  L^2(M_1\times M_2, S_1\tensor S_2\tensor E_1\otimes E_2)$ defines an
  algebra homomorphism $C^*(M_1;A_1)\tensor C^*(M_2;A_2)\to C^*(M_1\times
  M_2;A_1\tensor A_2) $ (which in general is
  not an isomorphism) and an induced map in K-theory
  \begin{equation*}
    \alpha\colon K_i(C^*(M_1;A_1))\tensor K_j(C^*(M_2;A_2))\to
    K_{i+j}(C^*(M_1\times M_2;A_1\tensor A_2)).
  \end{equation*}

  In this situation, we should have a product formula for the index of the
  Dirac operator: 
    If $M_1, M_2$ (and therefore also $M_1\times M_2$) are spin
    manifolds then for the coarse indices we obtain
    \begin{equation*}
      \alpha( \ind_c(D_{M_1,E_1})\tensor \ind_c(D_{M_2,E_2})) =
      \ind_c(D_{M_1\times M_2, E_1\tensor E_2}) \in K_{\dim(M_1)+\dim(M_2)}
      (C^*(M_1\times M_2;A_1\tensor A_2)).
    \end{equation*}
  \end{conjecture}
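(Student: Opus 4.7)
My plan proceeds in three stages: first establish that the external tensor product induces $\alpha$ at the level of $C^*$-algebras; second, decompose the twisted Dirac operator on the product as a graded tensor sum; third, use a Kasparov-style product formula to identify chopping functions modulo the Roe ideal.

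For the first stage, I would check directly that under the canonical unitary
\[
L^2(M_1, S_1 \otimes E_1) \otimes L^2(M_2, S_2 \otimes E_2) \iso L^2(M_1 \times M_2, (S_1 \boxtimes S_2) \otimes (E_1 \boxtimes E_2))
\]
of Hilbert $(A_1 \tensor A_2)$-modules, a finite-propagation tensor $T_1 \otimes T_2$ has propagation at most $R_1 + R_2$ in the $\ell^1$-product metric, which is bilipschitz equivalent to the Riemannian product metric. Local compactness should follow from the density of the algebraic span of pure products $\phi_1 \boxtimes \phi_2$ with $\phi_i \in C_c(M_i)$ inside $C_c(M_1 \times M_2)$, combined with the standard fact (\cite{Lance}) that the Hilbert module tensor product of an $A_1$-compact with an $A_2$-compact operator is $(A_1 \tensor A_2)$-compact. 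Passing to $C^*$-closures then yields the algebra map, and hence $\alpha$ on K-theory.

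For the second and third stages, I would use the standard product identity $D_{M_1 \times M_2, E_1 \boxtimes E_2} = D_{M_1, E_1} \hat\otimes 1 + 1 \hat\otimes D_{M_2, E_2}$, which follows from the tensor decomposition of Clifford multiplication and the Leibniz rule for tensor product connections. Then, following the Connes--Skandalis product construction, for suitably chosen odd normalizing functions $\chi, \chi_1, \chi_2$ whose Fourier transforms are compactly supported (so that all functional calculi have uniformly finite propagation), I expect to prove
\[
\chi(D_{M_1\times M_2}) \equiv \chi_1(D_{M_1}) \hat\otimes 1 + \bigl(1 - \chi_1(D_{M_1})^2\bigr)^{1/2} \hat\otimes \chi_2(D_{M_2}) \pmod{C^*(M_1 \times M_2; A_1 \tensor A_2)}.
\]
Under the boundary map $\partial$ of the K-theory long exact sequence, the right-hand side is a representative of $\alpha(\ind_c(D_{M_1, E_1}) \tensor \ind_c(D_{M_2, E_2}))$, which is the desired identity.

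The hardest step will be establishing this congruence rigorously modulo the coarse ideal, since the classical Kasparov version lives in the Calkin picture where compactness of errors is automatic, whereas here one needs quantitative control on propagation together with local compactness of the remainder. I expect to handle this using chopping functions with compactly supported Fourier transform and estimating commutators such as $[(1 - \chi_1(D_1)^2)^{1/2}, 1 \hat\otimes \chi_2(D_2)]$ via resolvent integrals or the Helffer--Sj\"ostrand calculus on the Hilbert module. An alternative, more geometric approach mimicking the rest of the paper would be to prove the formula first for $M_1 = \reals^p$, $M_2 = \reals^q$ (where it reduces to iterated Bott periodicity via Corollary \ref{corol:KRq}) and then use a generalization of the localization Proposition \ref{prop:loc_ind} together with the Mayer--Vietoris compatibility of Proposition \ref{prop:MV_ind} to transfer to general $M_1, M_2$; this would bypass the Connes--Skandalis formula entirely but requires extending the localization argument to product situations.
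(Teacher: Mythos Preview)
The statement you are attempting to prove is explicitly labeled a \emph{conjecture} in the paper; no proof is given. The authors only indicate a suggested strategy: rather than working directly with chopping functions in Roe's model, they propose redescribing the coarse index in a framework---the picture of \cite{Guentner-Higson} or Kasparov theory---in which external products are built into the formalism, proving the product formula there, and then identifying the resulting index with Roe's definition via the methods of \cite{Zeidler}. Your Connes--Skandalis approach is therefore not being compared to an actual proof, and it also differs from the route the authors recommend.

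Beyond the difficulty you already flag (establishing the congruence modulo the Roe ideal), there is a second gap you pass over. Even granting
\[
\chi(D_{M_1\times M_2}) \equiv \chi_1(D_{M_1}) \,\hat\otimes\, 1 + \bigl(1 - \chi_1(D_{M_1})^2\bigr)^{1/2} \,\hat\otimes\, \chi_2(D_{M_2}) \pmod{C^*(M_1\times M_2;A_1\otimes A_2)},
\]
your assertion that $\partial$ applied to the right-hand side yields $\alpha\bigl(\partial[D_1]\otimes\partial[D_2]\bigr)$ is not justified. The map $\alpha$ is defined only at the level of $K_*(C^*)$, via the homomorphism $C^*(M_1;A_1)\otimes C^*(M_2;A_2)\to C^*(M_1\times M_2;A_1\otimes A_2)$. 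There is no companion map $D^*\otimes D^*\to D^*$ compatible with the ideals: a pseudolocal operator tensored with a merely bounded one is not locally $A$-compact (e.g.\ compact~$\otimes$~identity is not compact), so the naive extension fails and no commuting square relates $\partial\otimes\partial$ on the factors to $\partial$ on the product. Bridging this is exactly the content that the classical Kasparov product supplies abstractly, and it is why the paper recommends switching to a model of the coarse index where the external product is already part of the machinery rather than attempting the computation inside $D^*/C^*$.

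Your alternative route via localization and iterated Mayer--Vietoris is closer in spirit to the paper's own methods, and the authors do remark that ``the method of proof of the current paper can be generalized to give a proof''; but they stop short of carrying this out, so that, too, remains a proposal rather than a proof.
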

  Indeed, this is the most non-trivial generalization as this index formula at
  some point requires explicit calculations. The best route to prove
  this is probably to give a new description of the coarse index, e.g.~using
  the picture of \cite{Guentner-Higson} or Kasparov theory, which is more
  directly adapted to such product 
  situations. Of course, then the task remains to identify the new definition
  of the index with the old one.
This should follow from the methods of Rudolf Zeidler's \cite{Zeidler}.
\end{enumerate}

It would perhaps also be desirable to give a written account of the details of
the generalizations of \ref{prop:flasque_vanishing}, \ref{prop:rel_K},
\ref{theo:MV}, on the other hand this is really straightforward.

\begin{remark}
  Note that the proof of Theorem \ref{theo:obstructmet} works without any
  change replacing $\complexs$ by a
  general $C^*$-algebra $A$, as soon as Theorem \ref{theo:part_mf} is
  generalized.
\end{remark}

 \begin{question}
   \begin{enumerate}
   \item It would be interesting to
 work out explicit example situations where our theorem, in particular the
  obstruction to positive scalar curvature, can be applied.
\item In particular, can one establish a higher version of the codimension
  $2$-obstruction to positive scalar curvature, using e.g.~intersections of
  several transversal codimension $2$-submanifolds, or an iterative procedure
  (with a second codimension $2$-submanifold inside the first,\ldots).

  Very speculatively, can one perhaps pass to even higher codimension
  submanifold obstructions?
   \item In \cite{Piazza-Schick}, the secondary invariants $\rho(g)\in
     K_{*+1}(D^*M)$ 
are studied and a secondary partitioned manifold index theorem for them is
established. It would be interesting to see whether the localization methods
of this paper carry over and whether one can 
establish a secondary multi-partitioned index theorem along these lines. For
the product formula for the secondary index, compare \cite{Zeidler}.
   \end{enumerate}
 \end{question}








\end{document}